\definecolor{aleacolor}{rgb}{0.16,0.59,0.78}
\renewcommand{\cite}{\citet}
\theoremstyle{plain}
\newtheorem{theorem}{Theorem}[section]
\newtheorem{proposition}[theorem]{Proposition}
\newtheorem{lemma}[theorem]{Lemma}
\newtheorem{corollary}[theorem]{Corollary}
\theoremstyle{definition}
\theoremstyle{remark}
\makeatletter \@addtoreset{equation}{section} \makeatother
\def \N {\mathbb N}
\def \Z {\mathbb Z}
\def \R {\mathbb R}
\newcommand{\ind}[1]{1\!\!1_{\{#1\}}}
\def\phi{\varphi}
\def\be{\begin{equation}}
\def\ee{\end{equation}}
\def\bea{\begin{eqnarray}}
\def\eea{\end{eqnarray}}
\newcommand{\abs}[1]{\left|{#1}\right|}
\newcommand{\prob}[2][]{\ensuremath{\mathbf{P}_{#1}\left(#2\right)}}
\newcommand{\expect}[2][]{\ensuremath{\mathbf{E}_{#1}\left(#2\right)}}
\newcommand{\condprob}[3][]{\ensuremath{\mathbf{P}_{#1}\left(#2\bigm|#3\right)}}
\newcommand{\Condprob}[3][]{\ensuremath{\mathbf{P}_{#1}\left(#2\biggm|#3\right)}}
\newcommand{\condexpect}[3][]{\ensuremath{\mathbf{E}_{#1}\left(#2\bigm|#3\right)}}
\def \well {\widetilde{\ell}}
\def \wX {\widetilde{X}}
\def\wT {\widetilde{T}}
\def\wLambda {\widetilde{\Lambda}}
\def\cF{\mathcal{F}}
\def\cT{\mathcal{T}}
\def\cX{\mathcal{X}}
\renewcommand{\d}{\,\mathrm d}
\newcommand{\BMl}{\mathfrak l}
\newcommand{\BMr}{\mathfrak r}
\renewcommand{\d}{\mathrm d}
\newcommand{\AxAsh}{{\lfloor Ax\rfloor,\lfloor\sqrt A\sigma h\rfloor}}
\title{Continuous time `true' self-avoiding random walk on $\Z$}
\author{ B\'alint T\'oth \and B\'alint Vet\H o}
\begin{document}

\maketitle

\begin{center}
\vspace*{-3ex}
Institute of Mathematics\\
Budapest University of Technology (BME)
\end{center}

\vspace*{3ex}

\begin{abstract}

We consider the continuous time version of the `true' or `myopic' self-avoiding
random walk with site repulsion in $1d$. The Ray\,--\,Knight-type method which
was applied in \citep{toth_95} to the discrete time and edge repulsion case is
applicable to this model with some modifications. We present a limit theorem
for the local time of the walk and a local limit theorem for the displacement.

\end{abstract}

\section{Introduction}

\subsection{Historical background}

Let $X(t)$, $t\in\Z_+:=\{0,1,2,\dots\}$ be a nearest neighbour walk on the
integer lattice $\Z$ starting from $X(0)=0$ and denote by $\ell(t,x)$,
$(t,x)\in\Z_+\times\Z$, its local time (that is: its occupation time measure)
on sites:
\[\ell(t,x):=\#\{0\le s\le t: X(s)=x\}\]
where $\#\{\dots\}$ denotes cardinality of the set. The true self-avoiding
random walk with site repulsion (STSAW) was introduced in
\citep{amit_parisi_peliti_83} as an example for a non-trivial random walk with
long memory which behaves qualitatively differently from the usual diffusive
behaviour of random walks. It is governed by the  evolution rules
\begin{align}
\condprob{X(t+1)=x\pm1} {{\cF}_t, \ X(t)=x} &= \frac {e^{-\beta\ell(t,x\pm1)}}
{e^{-\beta\ell(t,x+1)}+e^{-\beta\ell(t,x-1)}}\notag\\
&=\frac{e^{-\beta(\ell(t,x\pm1)-\ell(t,x))}}
{e^{-\beta(\ell(t,x+1)-\ell(t,x))}+e^{-\beta(\ell(t,x-1)-\ell(t,x))}},
\label{transprobstsaw}\\
\ell(t+1,x) &= \ell(t,x) + \ind{X(t+1)=x}.\notag
\end{align}

The extension of this definition to arbitrary dimensions is straightforward. In
\citep{amit_parisi_peliti_83}, actually, the multidimensional version of the
walk was defined. Non-rigorous -- nevertheless rather convincing -- scaling and
renormalization group arguments suggested that:
\begin{enumerate}
\item
In three and more dimensions, the walk behaves diffusively with a Gaussian
scaling limit of $t^{-1/2}X(t)$ as $t\to\infty$. See e.g.\
\citep{amit_parisi_peliti_83}, \citep{obukhov_peliti_83} and
\citep{horvath_toth_veto_10}.
\item
In one dimension (that is: the case formally defined above), the walk is
superdiffusive with a non-degenerate scaling limit of $t^{-2/3}X(t)$ as
$t\to\infty$, but with no hint about the limiting distribution.  See
\citep{peliti_pietronero_87}, \citep{toth_99} and \citep{toth_veto_08}.
\item
The critical dimension is $d=2$ where the Gaussian scaling limit is obtained
with logarithmic multiplicative corrections added to the diffusive scaling. See
\citep{amit_parisi_peliti_83} and \citep{obukhov_peliti_83}.
\end{enumerate}

These questions are still open. However, the scaling limit in one dimension of
a closely related object was clarified in \citep{toth_95}. The true
self-avoiding walk with self-repulsion defined in terms of the local times on
edges rather than sites is defined as follows:

Let $\wX(t)$, $t\in\Z_+:=\{0,1,2,\dots\}$ be yet again a nearest neighbour walk
on  the integer lattice $\Z$ starting from $\wX(0)=0$ and denote now by
$\well_{\pm}(t,x)$, $(t,x)\in\Z_+\times\Z$, its local time (that is: occupation
time measure) on unoriented edges:
\begin{align*}
\well_+(t,x)
&:=
\#\{0\le s < t: \{\wX(s),\wX(s+1)\}=\{x,x+1\} \},
\\
\well_-(t,x)
&:=
\#\{0\le s < t: \{\wX(s),\wX(s+1)\}=\{x,x-1\} \}.
\end{align*}
Note that $\well_+(t,x)=\well_-(t,x+1)$. The true self-avoiding random
walk with edge repulsion (ETSAW) is governed by the evolution rules
\begin{align*}
\condprob{\wX(t+1)=x\pm1} {{\cF}_t,\ \wX(t)=x} &= \frac
{e^{-2\beta\well_\pm(t,x)}}
{e^{-2\beta\well_+(t,x)}+e^{-2\beta\well_-(t,x)}}\\
&=\frac{e^{-\beta(\well_\pm(t,x)-\well_\mp(t,x))}}
{e^{-\beta(\well_+(t,x)-\well_-(t,x))}+
e^{-\beta(\well_-(t,x)-\well_+(t,x))}}\\
\well_\pm(t+1,x) &= \well_\pm(t,x) + \ind{\{\wX(t),\wX(t+1)\}=\{x,x\pm1\}}.
\end{align*}

In {\citep{toth_95}}, a limit theorem was proved for $t^{-2/3}\wX(t)$, as
$t\to\infty$. Later, in \citep{toth_werner_98}, a  space-time continuous
process $\R_+\ni t\mapsto \cX(t)\in\R$ was constructed -- called the true
self-repelling motion (TSRM) -- which possessed all the analytic and stochastic
properties of an assumed scaling limit of $\R_+\ni t\mapsto \cX^{(A)}(t):=
A^{-2/3}\wX([At])\in\R$. The invariance principle for this model has been
clarified in \citep{newman_ravishankar_06}.

A key point in the proof of \citep{toth_95}\ is a kind of Ray\,--\,Knight-type
argument which works for the ETSAW but not for the STSAW. (For the original
idea of Ray\,--\,Knight theory, see \citep{knight_63} and \citep{ray_63}.) Let
\[\wT_{\pm,x,h}:=\min\{t\ge0: \well_\pm(t,x)\ge h\},\qquad x\in\Z, \quad h\in\Z_+\]
be the so called inverse local times and
\[\wLambda_{\pm,x,h}(y):=\well_\pm(\wT_{\pm,x,h},y),\qquad x,y\in\Z, \quad h\in\Z_+\]
the local time sequence of the walk stopped at the inverse local times. It
turns out that, in the ETSAW case, for any fixed $(x,h)\in\Z\times\Z_+$, the
process $\Z\ni y\mapsto\wLambda_{\pm,x,h}(y)\in\Z_+$ is Markovian and it can
be thoroughly analyzed.

It is a fact that the similar reduction does not hold for the STSAW. Here, the
natural objects are actually slightly simpler to define:
\begin{align*}
T_{x,h}
&:=
\min\{t\ge0: \ell(t,x)\ge h\},&&
x\in\Z, & h\in\Z_+,
\\[1ex]
\Lambda_{x,h}(y)
&:=
\ell(T_{x,h},y),&&
x,y\in\Z, & h\in\Z_+.
\end{align*}
The process $\Z\ni y\mapsto\Lambda_{x,h}(y)\in\Z_+$ (with fixed
$(x,h)\in\Z\times\Z_+$) is not Markovian and thus the Ray\,--\,Knight-type of
approach fails. Nevertheless, this method works also for the model treated in
the present paper.

The main ideas of this paper are similar to those of \citep{toth_95}, but there
are essential differences, too. Those parts of the proofs which are the same as
in \citep{toth_95} will not be spelled out explicitly. E.g.\ the full proof of
Theorem \ref{thmtoth} is omitted altogether. We put the emphasis on those
arguments which differ genuinely from \citep{toth_95}. In particular, we
present some new coupling arguments.

This paper is organised as follows. First, we describe the model which we will
study and present our theorems. In Section \ref{RK}, we give the proof of
Theorem \ref{thmlimLambda} in three steps: we introduce the main technical
tools, i.e.\ some auxiliary Markov processes. Then we state technical lemmas
which are all devoted to check the conditions of Theorem \ref{thmtoth} cited
from \citep{toth_95}. Finally, we complete the proof using the lemmas. The
proof of these lemmas are postponed until Section \ref{proofs}. The proof of
Theorem \ref{thmXconv} is in Section \ref{Xconv}.

\subsection{The random walk considered and the main results}

Now, we define a version of true self-avoiding random walk in continuous time,
for which the Ray\,--\,Knight-type method sketched in the previous section is
applicable. Let $X(t)$, $t\in\R_+$ be a \emph{continuous time} random walk on
$\Z$ starting from $X(0)=0$ and having right continuous paths. Denote by
$\ell(t,x)$, $(t,x)\in\R_+\times\Z$ its local time (occupation time measure) on
sites:
\[\ell(t,x):=\abs{\{s\in[0,t)\,:\, X(s)=x\}}\]
where $|\{\dots\}|$ now denotes Lebesgue measure of the set indicated. Let
$w:\R\to(0,\infty)$ be an almost arbitrary rate function. We assume that it is
non-decreasing and not constant.

The law of the random walk is governed by the following jump rates and
differential equations (for the local time increase):
\begin{align}
\condprob{X(t+\d t)=x\pm1} {{\cF}_t, \ X(t)=x} &=
w(\ell(t,x)-\ell(t,x\pm1))\,\d t + o(\d t),\label{Xtrans}\\
\dot{\ell}(t,x) &= \ind{X(t)=x}\label{ltrans}
\end{align}
with initial conditions
\[X(0)=0, \qquad \ell(0,x)=0.\]
The dot in \eqref{ltrans} denotes time derivative. Note that for the the choice
of exponential weight function $w(u)=\exp\{\beta u\}$. This means exactly that
conditionally on a jump occurring at the instant $t$, the random walker jumps
to right or left from its actual position with probabilities
$e^{-\beta\ell(t,x\pm1)}/(e^{-\beta\ell(t,x+1)}+e^{-\beta\ell(t,x-1)})$, just
like in \eqref{transprobstsaw}. It will turn out that in the long run the
holding times remain of order one.

Fix $j\in\Z$ and $r\in\R_+$. We consider the random walk $X(t)$ running from
$t=0$ up to the stopping time
\begin{equation}
T_{j,r}=\inf\{t\ge0:\ell(t,j)\ge r\},\label{defT}
\end{equation}
which is the inverse local time for our model. Define
\begin{equation}
\Lambda_{j,r}(k):=\ell(T_{j,r},k)\qquad k\in\Z\label{Lambdadef}
\end{equation}
the local time process of $X$ stopped at the inverse local time.

Let
\begin{align*}
\lambda_{j,r}&:=\inf\{k\in\Z:\Lambda_{j,r}(k)>0\},\\
\rho_{j,r}&:=\sup\{k\in\Z:\Lambda_{j,r}(k)>0\}.
\end{align*}

Fix $x\in\R$ and $h\in\R_+$. Consider the two-sided reflected Brownian motion
$W_{x,h}(y)$, $y\in\R$ with starting point $W_{x,h}(x)=h$. Define the times of
the first hitting of $0$ outside the interval $[0,x]$ or $[x,0]$ with
\begin{align*}
\BMl_{x,h}&:=\sup\{y<0\wedge x:W_{x,h}(y)=0\},\\
\BMr_{x,h}&:=\inf\{y>0\vee x:W_{x,h}(y)=0\}
\end{align*}
where $a\wedge b=\min(a,b)$, $a\vee b=\max(a,b)$, and let
\begin{equation}
\cT_{x,h}:=\int_{\BMl_{x,h}}^{\BMr_{x,h}} W_{x,h}(y)\,\d y.\label{defcT}
\end{equation}

The main result of this paper is
\begin{theorem}\label{thmlimLambda}
Let $x\in\R$ and $h\in\R_+$ be fixed. Then
\begin{align}
A^{-1}\lambda_\AxAsh&\Longrightarrow\BMl_{0\wedge x,h},\\
A^{-1}\rho_\AxAsh&\Longrightarrow\BMr_{0\vee x,h},
\end{align}
and
\begin{equation}
\begin{split}
\left(\frac{\Lambda_\AxAsh(\lfloor Ay\rfloor)}{\sigma\sqrt A},
\frac{\lambda_\AxAsh}A\le y\le\frac{\rho_\AxAsh}A\right)\hspace*{8em}\\
\Longrightarrow\left(W_{x,h}(y), \BMl_{0\wedge x,h}\le y\le\BMr_{0\vee
x,h}\right)
\end{split}
\end{equation}
as $A\to\infty$ where
$\sigma^2=\int_{-\infty}^\infty u^2\rho(\d u)\in(0,\infty)$ with $\rho$ defined
by \eqref{defrho} and \eqref{defW} later.
\end{theorem}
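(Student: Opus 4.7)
The plan is to implement the Ray--Knight-type reduction announced in the introduction: after stopping the walk at the inverse local time $T_{j,r}$, identify an auxiliary Markov structure that describes the one-step transitions $k\mapsto\Lambda_{j,r}(k)$, and then invoke the general invariance principle quoted as Theorem~\ref{thmtoth} from \citep{toth_95} to recover reflected Brownian motion in the limit.

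First I would decompose the trajectory of $X(\cdot)$ on $[0,T_{j,r}]$ into the successive excursions that the walker makes to the right and to the left of the anchor site $j$. Between consecutive visits to a fixed site $k$, the walker's holding times at $k$ are independent exponentials and, on leaving $k$, it jumps to $k\pm1$ at rate $w(\ell(t,k)-\ell(t,k\pm1))$, so only local time differences matter. A careful bookkeeping of these excursions, in the spirit of \citep{toth_95} but requiring the new coupling arguments the authors announce in the introduction, should show that, conditional on an appropriate scalar statistic at height $k$, the profile $(\Lambda_{j,r}(k'))_{k'>k}$ is independent of $(\Lambda_{j,r}(k'))_{k'<k}$. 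This identifies an auxiliary Markov chain on $\Z_+$ which represents $k\mapsto\Lambda_{j,r}(k)$ essentially as a random walk with i.i.d.\ increments of common law $\rho$.

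The second step is to verify the drift, variance and moment hypotheses of Theorem~\ref{thmtoth}. Monotonicity and non-constancy of the rate function $w$ should force $\rho$ to be centred and to have a strictly positive, finite second moment $\sigma^2$, while tail bounds on the excursion lengths follow from elementary large-deviation estimates for the jump rates. Feeding this into Theorem~\ref{thmtoth}, the rescaled process $(\sigma\sqrt{A})^{-1}\Lambda_\AxAsh(\lfloor Ay\rfloor)$, started from height $h$ at position $x$ and run until it first hits $0$ on either side, converges weakly to reflected Brownian motion with diffusivity $\sigma^2$, i.e.\ exactly $W_{x,h}$ on $[\BMl_{0\wedge x,h},\BMr_{0\vee x,h}]$. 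Convergence of the rescaled boundary indices $A^{-1}\lambda_\AxAsh$ and $A^{-1}\rho_\AxAsh$ to these hitting times then follows from joint convergence and continuity of the first-hitting-of-zero functional, provided one has a short additional tightness estimate ruling out macroscopic lingering of the chain at small positive heights just before absorption.

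The main obstacle, I expect, lies in the identification and analysis of the auxiliary Markov chain. Unlike the edge-based model of \citep{toth_95}, the site-based continuous-time setting does not decouple cleanly at edges, so genuinely new couplings are needed both to establish the Markov property and to obtain enough structural information on $\rho$ (existence, exponential tails, an identifiable variance $\sigma^2$) to verify the hypotheses of Theorem~\ref{thmtoth}. Once that coupling is in place, the remainder is a comparatively routine invariance-principle-plus-continuous-mapping argument.
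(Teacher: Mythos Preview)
Your overall plan---Ray--Knight reduction to an auxiliary Markov structure, then invoke Theorem~\ref{thmtoth}---matches the paper's. But your diagnosis of where the work lies is inverted, and this matters because it obscures the one genuinely new idea.

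You write that ``the site-based continuous-time setting does not decouple cleanly at edges, so genuinely new couplings are needed \dots\ to establish the Markov property.'' In fact the opposite is the point: passing to \emph{continuous} time is exactly what restores a clean edge-by-edge decoupling that the discrete-time site model lacks. For each edge $\langle k,k+1\rangle$ the paper runs the walk in its own edge clock $\tau(t,k)=\ell(t,k)+\ell(t,k+1)$ and records the pair $(\alpha_k,\xi_k)$ of which endpoint is occupied and the signed local-time difference. These are Markov on $\{-1,+1\}\times\R$ with an explicit generator, and they are \emph{independent across $k$} directly from the construction---no coupling is needed for this. A further time change produces processes $\eta_{k,\pm}$ with stationary law $\rho$, and the recursion $\Lambda_{j,r}(k\pm1)=\Lambda_{j,r}(k)+\eta_{k\,\text{or}\,k-1,\mp}(\Lambda_{j,r}(k))$ follows. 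Note also that the increments are \emph{not} i.i.d.\ with law $\rho$ as you say: the step from height $y$ has law $P^y(0,\cdot)$ or $Q(0,\cdot)P^y$, which only approaches $\rho$ as $y\to\infty$. This is precisely why hypothesis~(i) of Theorem~\ref{thmtoth} demands exponential convergence of the step law to a limit $\pi$, not that the steps already be i.i.d.

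The new coupling arguments are located elsewhere, in Lemmas~\ref{lemma-momgenf}--\ref{lemmaunifexpbound}: one couples $\eta$ with spatially homogeneous comparison processes to obtain exponential moments of hitting times, the exponential-rate convergence $\|P^t(0,\cdot)-\rho\|\le Ce^{-\gamma t}$, and uniform tail bounds $P^t(0,(x,\infty))\le Ce^{-\gamma x}$. These verify hypotheses (i)--(iv) of Theorem~\ref{thmtoth}; the non-trapping condition (iv) is handled by a harmless modification of the step law at height~$0$, which also absorbs the ``short additional tightness estimate'' you anticipate near absorption.
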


\begin{corollary}\label{corTlim}
For any $x\in\R$ and $h\ge0$,
\begin{equation}
\frac{T_\AxAsh} {\sigma A^{3/2}}\Longrightarrow\cT_{x,h}.
\end{equation}
\end{corollary}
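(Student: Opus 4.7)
The plan is to combine the local-time convergence of Theorem \ref{thmlimLambda} with the trivial identity that the total elapsed time equals the total accumulated local time. Since $\ell(t,x)$ is an occupation time on sites and the walker is somewhere at every instant, we have $\sum_{k\in\Z}\ell(t,k)=t$ for every $t\ge0$. Specializing to $t=T_{j,r}$ and using definition \eqref{Lambdadef} gives the exact identity
\begin{equation*}
T_{j,r}=\sum_{k=\lambda_{j,r}}^{\rho_{j,r}}\Lambda_{j,r}(k).
\end{equation*}
This is the one structural input; everything else is a scaling/continuous-mapping argument.

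Next I rewrite the right-hand side, with $j=\lfloor Ax\rfloor$ and $r=\lfloor\sqrt A\sigma h\rfloor$, as a Riemann sum against the rescaled local-time profile that appears in Theorem \ref{thmlimLambda}:
\begin{equation*}
\frac{T_\AxAsh}{\sigma A^{3/2}}
=\frac1A\sum_{k=\lambda_\AxAsh}^{\rho_\AxAsh}\frac{\Lambda_\AxAsh(k)}{\sigma\sqrt A}
=\int_{\lambda_\AxAsh/A}^{\rho_\AxAsh/A}\frac{\Lambda_\AxAsh(\lfloor Ay\rfloor)}{\sigma\sqrt A}\,\d y,
\end{equation*}
where in the last step I have expressed the sum as a Lebesgue integral of a step function (the ceiling/floor boundary mismatch contributes at most $O(A^{-1/2})$, which is negligible). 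Thus the corollary reduces to the statement that the integral functional applied to the rescaled profile on its random support converges to $\cT_{x,h}=\int_{\BMl_{0\wedge x,h}}^{\BMr_{0\vee x,h}}W_{x,h}(y)\,\d y$.

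Here I invoke Theorem \ref{thmlimLambda}, which provides the joint convergence of the triple (left endpoint, right endpoint, rescaled profile) to $(\BMl_{0\wedge x,h},\BMr_{0\vee x,h},W_{x,h})$ in the appropriate Skorokhod-type topology (for instance, after extending the profile by zero outside its support, in the uniform topology on any compact interval of $\R$ that eventually contains both supports). Pass to an almost-sure version via Skorokhod's representation theorem and apply the continuous mapping theorem to the functional $\Phi(f,a,b)=\int_a^b f(y)\,\d y$; on the event of a.s.\ convergence this functional is continuous because $W_{x,h}$ is continuous and vanishes at $\BMl_{0\wedge x,h}$ and $\BMr_{0\vee x,h}$, which controls the contribution of the moving endpoints.

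The only step demanding any care is ensuring that no mass escapes to infinity when integrating over the random intervals $[\lambda_\AxAsh/A,\rho_\AxAsh/A]$. The tightness of these endpoints (a consequence of the first two convergences in Theorem \ref{thmlimLambda}) localizes the whole question to a fixed compact interval $[-M,M]$ outside an event of vanishing probability as $M\to\infty$. On such a compact interval, uniform convergence of the profile yields convergence of the integral, completing the argument. I do not expect any serious obstacle; if one arises it will be in confirming the precise sense of joint convergence delivered by Theorem \ref{thmlimLambda} — once that is unambiguous, continuous mapping closes the proof.
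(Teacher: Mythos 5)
Your proposal is correct and is exactly the argument the paper relies on implicitly: the corollary is stated without a written proof because, once one notes the identity $T_{j,r}=\sum_{k}\Lambda_{j,r}(k)$ (total elapsed time equals total accumulated local time), the claim is a continuous-mapping consequence of the joint convergence in Theorem~\ref{thmlimLambda}, precisely as in the analogous deduction in \citep{toth_95}. The only cosmetic point is that the step-function integral runs to $(\rho_\AxAsh+1)/A$ rather than $\rho_\AxAsh/A$, but as you observe the boundary term $\Lambda_\AxAsh(\rho_\AxAsh)/(\sigma A^{3/2})$ is negligible, so the argument closes.
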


For stating Theorem \ref{thmXconv}, we need some more definitions. It follows
from \eqref{defcT} that $\cT_{x,h}$ has an absolutely continuous distribution.
Let
\begin{equation}
\omega(t,x,h):=\frac\partial{\partial t}\,\prob{\cT_{x,h}<t}\label{defomega}
\end{equation}
be the density of the distribution of $\cT_{x,h}$. Define
\[\varphi(t,x):=\int_0^\infty \omega(t,x,h)\,\d h.\]

Theorem 2 of \citep{toth_95} gives that, for fixed $t>0$, $\varphi(t,\cdot)$ is
a density function, i.e.
\begin{equation}
\int_{-\infty}^\infty \varphi(t,x)\,\d x=1.\label{intfi}
\end{equation}
One could expect that $\varphi(t,\cdot)$ is the density of the limit
distribution of $X(At)/A^{2/3}$ as $A\to\infty$, but we prove a similar
statement for their Laplace transform. We denote by $\hat\varphi$ the Laplace
transforms of $\varphi$:
\begin{equation}
\hat\varphi(s,x):=s\int_0^\infty e^{-st}\varphi(t,x)\,\d t.\label{deffihat}
\end{equation}

\begin{theorem}\label{thmXconv}
Let $s\in\R_+$ be fixed and $\theta_{s/A}$ a random variable of exponential
distribution with mean $A/s$ which is independent of the random walk $X(t)$.
Then, for almost all $x\in\R$,
\begin{equation}
A^{2/3}\prob{X(\theta_{s/A})=\lfloor A^{2/3}x\rfloor}\to\hat\varphi(s,x)
\end{equation}
as $A\to\infty$.
\end{theorem}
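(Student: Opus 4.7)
The plan is to rewrite $\prob{X(\theta_{s/A})=k}$ as a Laplace transform of the inverse local times $T_{k,r}$ so that Corollary \ref{corTlim} can be fed into it. Setting $\lambda:=s/A$ and using independence of $\theta_{s/A}$ from the walk, we have $\prob{X(\theta_{s/A})=k}=\lambda\int_0^\infty e^{-\lambda t}\prob{X(t)=k}\,\d t$. Because $\prob{X(t)=k}=\frac{\d}{\d t}\expect{\ell(t,k)}$ and $r\mapsto T_{k,r}$ is the right-continuous inverse of $t\mapsto\ell(t,k)$, Fubini converts the Laplace transform in $t$ into one in $r$:
\[
\prob{X(\theta_{s/A})=k}=\lambda\int_0^\infty\expect{e^{-\lambda T_{k,r}}}\,\d r.
\]
This is the identity that lets the convergence provided by Corollary \ref{corTlim} enter.

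Taking $k=\lfloor A^{2/3}x\rfloor$ and substituting $r=\sigma A^{1/3}h$ in the $r$-integral recasts the left-hand side of the theorem as
\[
A^{2/3}\prob{X(\theta_{s/A})=\lfloor A^{2/3}x\rfloor}=s\sigma\int_0^\infty\expect{\exp\!\bigl(-\tfrac{s}{A}\,T_{\lfloor A^{2/3}x\rfloor,\,\sigma A^{1/3}h}\bigr)}\,\d h.
\]
Applying Corollary \ref{corTlim} with its running parameter replaced by $A^{2/3}$ gives $T_{\lfloor A^{2/3}x\rfloor,\lfloor\sigma A^{1/3}h\rfloor}/(\sigma A)\Rightarrow\cT_{x,h}$, whence $(s/A)\,T\Rightarrow s\sigma\cT_{x,h}$; the boundedness and continuity of $z\mapsto e^{-z}$ on $[0,\infty)$ then yield pointwise convergence of the integrand in $h$ to $\expect{e^{-s\sigma\cT_{x,h}}}$ for each fixed $h>0$. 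The main obstacle, in my view, is justifying the interchange of the limit in $A$ with the $h$-integral. Monotonicity of the integrand in $h$ (from $r\mapsto T_{k,r}$ being non-decreasing) together with the super-Gaussian decay $\expect{e^{-s\sigma\cT_{x,h}}}\lesssim\exp(-c h^{3})$ of the limit — which reflects the fact that the area under a reflected Brownian excursion of height $h$ is of order $h^{3}$ — suggest dominated convergence, but a matching uniform-in-$A$ tail estimate $\expect{e^{-(s/A)T_{\lfloor A^{2/3}x\rfloor,\sigma A^{1/3}h}}}\le G(h)$ with $\int G<\infty$ must be extracted from the quantitative inverse-local-time control developed in the proof of Theorem \ref{thmlimLambda}.

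Once the interchange is secured, the limit equals $s\sigma\int_0^\infty\expect{e^{-s\sigma\cT_{x,h}}}\,\d h$. On the other hand, Fubini applied to the definitions \eqref{defomega}--\eqref{deffihat}, together with the Laplace identity $\int_0^\infty e^{-st}\omega(t,x,h)\,\d t=\expect{e^{-s\cT_{x,h}}}$, gives $\hat\varphi(s,x)=s\int_0^\infty\expect{e^{-s\cT_{x,h}}}\,\d h$. The two expressions are matched through the Brownian scaling identity $\cT_{x,h}\stackrel{d}{=}c^{3}\cT_{x/c^{2},h/c}$ applied with a suitable $c$ to absorb the factors of $\sigma$, followed by a change of variable in the $h$-integral. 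Convergence at each continuity point of $\hat\varphi(s,\cdot)$ suffices, and since the limit is a probability density (cf.\ \eqref{intfi}), the exceptional set of $x$'s is Lebesgue-null, in line with the ``almost every'' in the statement.
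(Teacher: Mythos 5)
Your reduction to the $h$-integral representation
\[A^{2/3}\prob{X(\theta_{s/A})=\lfloor A^{2/3}x\rfloor}
=s\sigma\int_0^\infty\expect{\exp\bigl(-\tfrac{s}{A}\,T_{\lfloor A^{2/3}x\rfloor,\sigma A^{1/3}h}\bigr)}\,\d h\]
is exactly the paper's \eqref{Ptransf}--\eqref{fihatfinal}, and the pointwise-in-$h$ convergence of the integrand via Corollary \ref{corTlim} (applied with scaling parameter $A^{2/3}$) is also the same. The genuine gap is where you flag it: the interchange of $A\to\infty$ with the $h$-integral. You propose dominated convergence, and you correctly observe that the \emph{limiting} integrand $\expect{e^{-s\sigma\cT_{x,h}}}$ has super-polynomial decay in $h$, but that does not furnish a dominating function for the prelimit. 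The elementary bound $T_{k,r}\ge r$ only gives $\expect{\exp(-\tfrac{s}{A}T_{\lfloor A^{2/3}x\rfloor,\sigma A^{1/3}h})}\le e^{-s\sigma A^{-2/3}h}$, which degenerates as $A\to\infty$, and monotonicity of the integrand in $h$ supplies no uniform-in-$A$ integrable majorant by itself. You concede that such a bound ``must be extracted'' from the Ray\,--\,Knight machinery, but you do not extract it, so the argument as written is not closed — and it is not what the paper does.

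The paper sidesteps domination entirely with a Fatou-plus-normalization trick. A first application of Fatou's lemma to the $h$-integral gives $\liminf_{A\to\infty}\hat\varphi_A(s,x)\ge\sigma^{2/3}\hat\varphi(s,\sigma^{2/3}x)$ for every $x$, using \eqref{fiscaling} to absorb the $\sigma$'s. The decisive extra input is that $\int_\R\hat\varphi_A(s,x)\,\d x=1$ for every $A$ (because $\varphi_A(t,\cdot)$ is a probability density for each $t$) and that the lower bound also integrates to $1$ by \eqref{intfi} and a change of variables. A second application of Fatou, this time to the $x$-integral, squeezes
\[1\le\int_\R\liminf_{A\to\infty}\hat\varphi_A(s,x)\,\d x\le\liminf_{A\to\infty}\int_\R\hat\varphi_A(s,x)\,\d x=1,\]
which forces the pointwise $\liminf$ to coincide with its lower bound for Lebesgue-a.e.\ $x$. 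This, not continuity of $\hat\varphi(s,\cdot)$, is the source of the ``for almost all $x$'' in the statement; your closing remark about continuity points is off target. To repair your proposal, replace the dominated-convergence step with this two-fold Fatou argument; the rest of your computation is sound and identical in content to the paper's.
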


From this local limit theorem, the integral limit theorem follows immediately:
\[\lim_{A\to\infty}\prob{A^{-2/3}X(\theta_{s/A})<x}=\int_{-\infty}^x
\hat\varphi(s,y)\,\d y.\]

\section{Ray\,--\,Knight construction}\label{RK}

The aim of this section is to give a random walk representation of the local
time sequence $\Lambda_{j,r}$. Therefore, we introduce auxiliary Markov
processes corresponding to each edge of $\Z$. The process corresponding to the
edge $e$ is defined in such a way that its value is the difference of local
times of $X(T_{j,r})$ on the two vertices adjacent to $e$ where $X(T_{j,r})$ is
the process $X(t)$ stopped at an inverse local time. It turns out that the
auxiliary Markov processes are independent. Hence, by induction, the sequence
of local times can be given as partial sums of independent auxiliary Markov
processes. The proof of Theorem \ref{thmlimLambda} relies exactly on this
observation.

\subsection{The basic construction}\label{basic}

Let
\begin{equation}
\tau(t,k):=\ell(t,k)+\ell(t,k+1)\label{deftau}
\end{equation}
be the local time spent on (the endpoints of) the edge $\langle k,k+1\rangle$,
$k\in\Z$, and
\begin{equation}
\theta(s,k):=\inf\{t\ge0\,:\,\tau(t,k)>s\}
\end{equation}
its inverse. Further on, define
\begin{align}
\xi_k(s)&:=\ell(\theta(s,k),k+1)-\ell(\theta(s,k),k),\\
\alpha_k(s)&:=\ind{X(\theta(s,k))=k+1}-\ind{X(\theta(s,k))=k}.
\end{align}

A crucial observation is that, for each $k\in\Z$,
$s\mapsto(\alpha_k(s),\xi_k(s))$ is a Markov process on the state space
$\{-1,+1\}\times\R$. The transition rules are
\begin{align}
\condprob{\alpha_k(t+\d t)=-\alpha_k(t)}{\cF_t}
&=w(\alpha_k(t)\xi_k(t))\,\d t + o(\d t),\label{lawalpha}\\
\dot{\xi_k}(t)&=\alpha_k(t),\label{lawxi}
\end{align}
with some initial state $(\alpha_k(0),\xi_k(0))$. Furthermore, these processes
are independent. In plain words:
\begin{enumerate}
\item
$\xi_k(t)$ is the difference of time spent by $\alpha_k$ in the states $+1$ and
$-1$, alternatively, the difference of time spent by the walker on the sites
$k+1$ and $k$;
\item
$\alpha_k(t)$ changes sign with rate $w(\alpha_k(t)\xi_k(t))$ since the walker
jumps between $k$ and $k+1$ with these rates.
\end{enumerate}

The common infinitesimal generator of these processes is
\[(Gf)(\pm1, u)=\pm f'(\pm1,u) + w(\pm u)\big(f(\mp1,u)-f(\pm1,u)\big)\]
where $f'(\pm1,u)$ is the derivative with respect to the second variable. It is
an easy computation to check that these Markov processes are ergodic and their
common unique stationary measure is
\begin{equation}
\mu(\pm1,\d u)=\frac{1}{2Z}e^{-W(u)}\,\d u\label{defmu}
\end{equation}
where
\begin{equation}
W(u):=\int_0^u \left(w(v)-w(-v)\right)\d v\quad\text{and}\quad
Z:=\int_{-\infty}^\infty e^{-W(v)}\,\d v.\label{defW}
\end{equation}
Mind that, due to the condition imposed on $w$ (non-decreasing and
non-constant),
\begin{equation}
\lim_{\abs{u}\to\infty}\frac{W(u)}{\abs{u}}=\lim_{v\to\infty}(w(v)-w(-v))>0,
\label{Z<infty}
\end{equation}
and thus $Z<\infty$ and $\mu(\pm1,\d u)$ is indeed a probability measure on
$\{-1,+1\}\times\R$.

Let
\begin{equation}
\beta_\pm(t,k):=\inf\left\{s\ge0:\int_0^s \ind{\alpha_k(u)=\pm1}\,\d u\ge
t\right\}
\end{equation}
be the inverse local times of $(\alpha_k(t),\xi_k(t))$. With the use of them,
we can define the processes
\begin{equation}\label{defetak}
\eta_{k,-}(t):=\xi_k(\beta_-(t,k)),\qquad\eta_{k,+}(t):=-\xi_k(\beta_+(t,k)).
\end{equation}
which are also Markovian. By symmetry, the processes with different sign have
the same law. The infinitesimal generator of $\eta_{k,\pm}$ is
\[(Hf)(u)=-f'(u)+w(u)\int_u^\infty e^{-\int_u^v w(s)\,\d s}w(v)(f(v)-f(u))\,\d v.\]

It is easy to see that the Markov processes $\eta_{k,\pm}$ are ergodic and
their common unique stationary distribution is
\begin{equation}
\rho(\d u):=\frac1Z e^{-W(u)}\,\d u\label{defrho}
\end{equation}
with the notations \eqref{defW}. The stationarity of $\mu$ is not surprising
after \eqref{defmu}, but a straightforward calculation yields it also.

The main point is the following
\begin{proposition}
\begin{enumerate}
\item The processes $s\mapsto(\alpha_k(s),\xi_k(s))$, $k\in\Z$ are
independent Markov process with the same law given in
\eqref{lawalpha}--\eqref{lawxi}. They start from the initial states
$\xi_k(0)=0$ and
\[\alpha_k(0)=\left\{
\begin{array}{rl}
+1\quad & \mbox{if}\quad k<0,\\
-1\quad & \mbox{if}\quad k\ge0.
\end{array}\right.\]

\item The processes $s\mapsto\eta_{k,\pm}(s)$, $k\in\Z$ are independent
Markov processes if we consider exactly one of $\eta_{k,+}$ and $\eta_{k,-}$
for each $k$. The initial distributions are
\begin{align}
\prob{\eta_{k,+}(0)\in A}&=\left\{
\begin{array}{ll} Q(0,A)\quad & \mbox{if}\quad k\ge0,\\ \ind{0\in A}\quad &
\mbox{if}\quad k<0,
\end{array}\right.\label{initial+}\\[2ex]
\prob{\eta_{k,-}(0)\in A}&=\left\{
\begin{array}{ll} \ind{0\in A}\quad & \mbox{if}\quad k\ge0,\\ Q(0,A)\quad &
\mbox{if}\quad k<0.
\end{array}\right.\label{initial-}
\end{align}
\end{enumerate}
\end{proposition}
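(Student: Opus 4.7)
My plan is to derive both the Markov and the independence structures simultaneously via an explicit coupling: construct a walker out of \emph{independent} copies of the edge processes with the prescribed generators and initial data, verify that this walker satisfies the same jump rates as $X$, and then invoke uniqueness in law to identify the edge processes of $X$ with those independent copies.

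Concretely, take independent Markov processes $(\wt\alpha_k,\wt\xi_k)_{k\in\Z}$ with the common generator $G$ of \eqref{lawalpha}--\eqref{lawxi} and with initial conditions $\wt\xi_k(0)=0$, $\wt\alpha_k(0)=+1$ for $k<0$, $\wt\alpha_k(0)=-1$ for $k\ge0$. Build a walker $\wt X$ from them by maintaining edge-clocks $s_k$ initialised at $0$ and setting $\wt X(0)=0$: while $\wt X(t)=v$, let $s_{v-1}$ and $s_v$ advance at unit real-time rate (all other $s_k$ frozen), and let $\wt X$ make a jump at the first subsequent flip of either $\wt\alpha_{v-1}$ or $\wt\alpha_v$ --- a flip of $\wt\alpha_{v-1}$ sending $\wt X$ from $v$ to $v-1$, and a flip of $\wt\alpha_v$ sending $\wt X$ from $v$ to $v+1$. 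The essential consistency claim is that whenever $\wt X(t)\in\{k,k+1\}$, the current value $\wt\alpha_k(s_k)$ encodes the walker's position on that edge. At the first entry into the edge $\langle k,k+1\rangle$ this is forced by the initial condition on $\wt\alpha_k(0)$: edges with $k\ge0$ are first entered at vertex $k$, and edges with $k<0$ at vertex $k+1$. At subsequent entries one uses the observation that $\wt X$ always re-enters an edge via the same endpoint by which it left, while $s_k$ stays frozen in between. Integrating $\dot{\wt\xi}_k=\wt\alpha_k$ yields $\wt\xi_k(s_k)=\wt\ell(t,k+1)-\wt\ell(t,k)$, so the jump rates of $\wt X$ at a generic vertex $v$ are $w(\wt\xi_{v-1})=w(\wt\ell(t,v)-\wt\ell(t,v-1))$ and $w(-\wt\xi_v)=w(\wt\ell(t,v)-\wt\ell(t,v+1))$, precisely matching \eqref{Xtrans}. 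Uniqueness in law of the walker with these rates gives $\wt X\stackrel{d}{=}X$, and since the edge processes are measurable functionals of the walker, part~(1) follows.

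Part~(2) is then a consequence of part~(1). Each $\eta_{k,\pm}$ is a time-change of $\pm\xi_k$ by the excursion structure of $\alpha_k$, so the strong Markov property of $(\alpha_k,\xi_k)$ at successive flip times yields the Markov property of $\eta_{k,\pm}$, while the independence across $k$ of the pairs $(\alpha_k,\xi_k)$ from part~(1) transfers directly to any selection of one $\eta_{k,\pm}$ per $k$. The initial distributions come from direct inspection: if $\alpha_k(0)=\pm1$ then the first instant with sign $\pm1$ is $s=0$ and $\xi_k(0)=0$, giving the Dirac mass $\ind{0\in A}$; if $\alpha_k(0)=\mp1$ then one waits for exactly one flip, during which $\xi_k$ moves linearly at unit rate against a flip intensity $w(\cdot)$, yielding the density $w(u)e^{-\int_0^u w(s)\,\d s}$ for $u>0$, which is precisely $Q(0,\cdot)$.

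The main obstacle is the rigorous verification of the coupling. The key technical steps are to rule out simultaneous flips of different edge chains (so that each jump of $\wt X$ is unambiguous), to rule out finite-time accumulation of flips and hence non-explosion of $\wt X$, and to push the inductive consistency check through all subsequent visits to each edge. These are standard given that the $\wt\alpha_k$ are right-continuous pure-jump processes with locally finite numbers of transitions and that the stationary measure $\rho$ has finite $|u|$-moment by \eqref{Z<infty}.
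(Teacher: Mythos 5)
Your proposal is correct and amounts to a careful, rigorous spelling-out of the Ray--Knight construction that the paper states without proof (the paper defers the details to the analogous edge-repulsion argument in T\'oth (1995)). The coupling route --- build a walker from \emph{a priori} independent edge chains and identify it with $X$ by uniqueness in law --- is a clean way to get the independence in Part~(1), arguably cleaner than reading the edge processes off the walker and then verifying independence directly, since independence is baked in from the start and one only has to check the jump-rate consistency. The consistency check itself is correct: on $\Z$, every re-entry into the set $\{k,k+1\}$ is necessarily via the endpoint through which the walker last exited, and $s_k$ is frozen in the interim, so $\wt\alpha_k(s_k)$ continues to encode the correct endpoint; the initial values of $\wt\alpha_k$ match the first-entry endpoints (at $k$ when $k\ge0$, at $k+1$ when $k<0$). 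Your derivation of the initial laws in Part~(2) is also right; the only caveat worth adding is that for the $Q(0,\cdot)$ case one must read $\eta_{k,\pm}(0)$ as the value of $\mp\xi_k$ at the right-continuous inverse local time (i.e.\ at the first flip), otherwise it would trivially be $0$. The technicalities you flag --- no two chains flip simultaneously, no accumulation of flips, uniqueness in law for the walk with these rates --- are exactly the remaining points to discharge, and they are standard given that each $\wt\alpha_k$ has a.s.\ locally finitely many transitions.
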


\subsection{Technical lemmas}

The lemmas of this subsection descibe the behaviour of the auxiliary Markov
processes $\eta_{k,\pm}$. Since they all have the same law, we denote them by
$\eta$ to keep the notation simple, and it means that the statement is true for
all $\eta_{k,\pm}$.

Fix $b\in\R$. Define the stopping times
\begin{align}
\theta_+&:=\inf\{t>0:\eta(t)\ge b\},\\
\theta_-&:=\inf\{t>0:\eta(t)\le b\}.
\end{align}

In our lemmas, $\gamma$ will always be a positive constant, which is considered
as being a small exponent, and $C$ will be a finite constant considered as
being large. To simplify the notation, we will use the same letter for
constants at different points of our proof. The notation does not emphasizes,
but their value depend on $b$.

First, we estimate the exponential moments of $\theta_-$ and $\theta_+$.

\begin{lemma}\label{lemma-momgenf}
There are $\gamma>0$ and $C<\infty$ such that, for all $y\ge b$,
\begin{equation}
\condexpect{\exp(\gamma\theta_-)}{\eta(0)=y}\le\exp(C(y-b)).\label{esttheta-}
\end{equation}
\end{lemma}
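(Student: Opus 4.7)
The plan is to construct a Lyapunov function and apply an optional stopping argument. Set
\[
g(u) := e^{C(u-b)} \text{ for } u \ge b, \qquad g(u) := 1 \text{ for } u \le b,
\]
for positive constants $C$ and $\gamma$ to be chosen later (both depending on $b$). The continuous extension across $b$ is harmless since $\eta$ hits $b$ continuously from above.

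I would first compute $(H + \gamma I) g$ on $(b,\infty)$. Using that $\eta$ has continuous drift of speed $-1$ and upward jumps whose size $J$ from state $u$ has survival $\exp\bigl(-\int_0^j w(u+s)\,\d s\bigr)$, one finds
\[
\frac{(Hg + \gamma g)(u)}{g(u)} = -C + \gamma + R(u)\bigl(\expect[u]{e^{CJ}} - 1\bigr),
\]
where $R(u)$ denotes the state-dependent jump rate. Monotonicity of $w$ gives $\int_0^j w(u+s)\,\d s \ge w(u)j$, hence $\expect[u]{e^{CJ}} - 1 \le C/(w(u) - C)$ whenever $C < w(u)$.

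Next I would exploit that $w(u) \ge w(b) > 0$ on $\{u \ge b\}$ and that $R(u)$ is bounded on the same set. Choosing $C$ small (in particular $C < w(b)$) and $\gamma$ small enough, one can ensure
\[
R(u) \cdot \frac{C}{w(u) - C} \le C - \gamma \qquad \text{uniformly in } u \ge b,
\]
which amounts to $(H + \gamma I) g \le 0$ on $(b, \infty)$. Dynkin's formula then shows that $M_t := e^{\gamma (t \wedge \theta_-)} g(\eta(t \wedge \theta_-))$ is a nonnegative supermartingale under the law of $\eta$ starting from $y$. Because $\eta$'s only discontinuities are upward jumps, the process can leave $(b, \infty)$ only by decreasing continuously through $b$, so $\eta(\theta_-) = b$ and $g(\eta(\theta_-)) = 1$. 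Optional stopping, together with Fatou's lemma for $t \to \infty$, delivers
\[
\condexpect{e^{\gamma \theta_-}}{\eta(0) = y} \le M_0 = g(y) = e^{C(y-b)},
\]
which is exactly \eqref{esttheta-}.

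The main obstacle is verifying the uniform drift inequality on $\{u \ge b\}$: the bound $C/(w(u)-C)$ is sharpest at $u = b$ and $R(u)$ need not decay at infinity, so producing a strictly positive $\gamma$ requires a careful balance between the smallness of $C$ and the variation of $w$ and $R$ across $[b,\infty)$. This is precisely the step that forces both constants $C$ and $\gamma$ to depend on $b$, as announced in the paper.
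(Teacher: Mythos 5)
Your Lyapunov-function strategy is closely related in spirit to the paper's argument, but organised differently: the paper first couples $\eta$ on $\{\eta\ge b\}$ with a spatially homogeneous process $\zeta$ (constant jump rate $w(-b)$ and jump size distributed as a jump of $\eta$ launched from $b$), so that $\theta_-\le\theta_-'$, and then applies the exponential Wald martingale $\exp\bigl(\alpha(\zeta(t)-\zeta(0))-t\log\expect{e^{\alpha(\zeta(1)-\zeta(0))}}\bigr)$ to the Lévy-type process $\zeta$; you skip the comparison step and run the drift inequality directly on the generator of $\eta$. The idea is fine, but the bound you substitute for the jump moment generating function gives away too much.

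The concrete gap is the sentence ``one can ensure $R(u)\cdot\frac{C}{w(u)-C}\le C-\gamma$ uniformly in $u\ge b$.'' With $R(u)=w(-u)$ (the reading that makes your remark ``$R$ is bounded on $\{u\ge b\}$'' correct; the $w(u)$ printed in the paper's displayed generator is a misprint), the left-hand side is maximised over $u\ge b$ at $u=b$, where it equals $w(-b)\frac{C}{w(b)-C}$. This is $<C$ if and only if $w(-b)<w(b)-C$, which is impossible whenever $w(-b)\ge w(b)$, i.e.\ roughly whenever $b\le 0$ --- and the lemma is invoked in the paper precisely for such $b$ (the target level in the proof of Lemma~\ref{lemma+momgenf} has $w(-b)>w(b)$, and Lemma~\ref{lemmaunifexpbound} uses $b=0$). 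The culprit is the domination $J\preceq\mathrm{Exp}(w(u))$: replacing $\int_0^j w(u+s)\,\d s$ by $w(u)j$ discards the strictly faster decay coming from the growth of $w$ over the jump interval, and that extra decay is exactly what gives $\eta$ its strictly negative drift near and just below $0$. Under your bound one has $\frac{1}{C}\,w(-u)\bigl(\expect[u]{e^{CJ}}-1\bigr)\le\frac{w(-u)}{w(u)-C}$, which at $u=b$ tends to $w(-b)/w(b)\ge1$ as $C\downarrow0$, so no positive $\gamma$ can exist; the true limit $w(-u)\int_0^\infty\exp\bigl(-\int_0^j w(u+s)\,\d s\bigr)\,\d j$ stays strictly below $1$ under the drift hypothesis the paper relies on. To repair your argument you would have to retain the exact jump tail in the estimate of $\expect[u]{e^{CJ}}-1$, at which point you are essentially reconstructing the cumulant $\log\expect{e^{\alpha(\zeta(1)-\zeta(0))}}$ on which the paper's optional-stopping computation is based.
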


\begin{lemma}\label{lemma+momgenf}
There exists $\gamma>0$ such that
\begin{equation}
\condexpect{\exp(\gamma\theta_+)}{\eta(0)=b}<\infty.\label{esttheta+}
\end{equation}
\end{lemma}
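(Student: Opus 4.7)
I plan to use a Foster--Lyapunov drift argument. Construct a function $V:\R\to[1,\infty)$, finite at $b$, satisfying
\[
(HV)(u)\le-\gamma V(u)\quad\text{for all }u<b,
\]
for some $\gamma>0$. Given such $V$, Dynkin's formula makes $e^{\gamma(t\wedge\theta_+)}V(\eta(t\wedge\theta_+))$ a positive supermartingale under the conditional law $\mathbf{P}(\,\cdot\mid\eta(0)=b)$; optional stopping at the (a.s.\ finite, by ergodicity) time $\theta_+$ yields
\[
\condexpect{\exp(\gamma\theta_+)}{\eta(0)=b}\le V(b)<\infty,
\]
which is the claim. The drift inequality is needed only on $\{u<b\}$ because $V\ge1$ and $\eta(\theta_+)\ge b$.

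Constructing $V$ is the heart of the argument. A naive ansatz $V(u)=1+Ae^{c(b-u)}$ on $\{u<b\}$ fails in the tail $u\to-\infty$ when $w(-\infty)=0$: a direct computation shows that the continuous drift contribution $-V'(u)=cV(u)$ is compensated only by a jump term bounded by $c\,w(u)/(w(u)+c)\cdot V(u)<cV(u)$, which degenerates as $w(u)\downarrow 0$. A more robust choice ties the Lyapunov function to the potential $W$ of the stationary density $\rho(\d u)\propto e^{-W(u)}\d u$, for instance
\[
V(u)=1+B\,e^{\kappa(W(u)-W(b))_+},
\]
with small $\kappa,B>0$. The linear growth $W(u)/|u|\to w(\infty)-w(-\infty)>0$ at $u\to\pm\infty$ implied by \eqref{Z<infty} makes $V$ finite everywhere, and its growth matches the characteristic scale of the ``restoring force'' encoded in $\rho$.

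The main obstacle is verifying $HV\le-\gamma V$ uniformly on $(-\infty,b)$. This reduces to explicit estimation of the integral
\[
w(u)\int_u^\infty e^{-\int_u^v w(s)\,\d s}\,w(v)\,V(v)\,\d v,
\]
exploiting the monotonicity of $w$ and the identity $W'(v)=w(v)-w(-v)$. The tail $u\to-\infty$ is handled by the linear growth of $W$ (which yields the genuine restoring force in the generator), while the boundary behavior $u\to b^-$ requires a careful local analysis, as there the jumps move mass only slightly past $b$ and the jump compensation is small; if needed, one refines the ansatz near $b$ (e.g.\ multiplying by a factor $(b-u)$ so $V'(b^-)=0$) to cover the boundary. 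Tuning $\kappa,B,\gamma$ appropriately makes the drift inequality hold globally, completing the proof.
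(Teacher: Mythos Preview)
Your proposal is a plan, not a proof: the entire content of a Foster--Lyapunov argument lies in exhibiting $V$ and verifying $HV\le-\gamma V$ on $(-\infty,b)$, and you leave this undone beyond asserting that ``tuning $\kappa,B,\gamma$ appropriately'' will work. Worse, your proposed ansatz cannot work as stated. A direct computation from \eqref{defW} shows that $W$ is even, so $V(u)=1+Be^{\kappa(W(u)-W(b))_+}$ grows on $\{|u|>|b|\}$ in \emph{both} directions; in particular it grows on $[b,\infty)$. Jumps of $\eta$ from $u<b$ land in $(u,\infty)$, and for $u\uparrow b$ they land mostly above $b$; the jump part of $HV(u)$ is therefore \emph{positive} there (you are integrating $V(v)-V(u)>0$ against $Q(u,\d v)$), while $V'(u)\approx0$. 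Hence $HV(u)>0$ near the boundary and no choice of constants repairs this---any workable Lyapunov function must stay bounded on $[b,\infty)$ and grow only as $u\to-\infty$. Note also that the jump rate of $\eta$ at $u$ is $w(-u)$, not $w(u)$ (the displayed formula for $H$ carries a typo; compare the rates of the comparison processes $\zeta,\kappa$ in the paper's own proofs, and observe that rate $w(u)$ would drive $\eta$ to $-\infty$ when $w(-\infty)=0$, contradicting ergodicity). With the correct rate your reason for discarding the naive exponential is void, since $w(-u)\to w(+\infty)>0$ as $u\to-\infty$.

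The paper argues quite differently. For $b$ negative enough that $w(-b)>w(b)$, it couples $\eta$ on $(-\infty,b]$ with a spatially homogeneous submartingale $\kappa$ (a L\'evy-type process) and applies the exponential-martingale trick to $\kappa$ directly. For a general level $b_1$, it fixes such a $b<b_1$, decomposes the path of $\eta$ into i.i.d.\ excursions above and below $b$, shows via the first step together with Lemma~\ref{lemma-momgenf} that the excursion lengths have exponential moments, and then combines a renewal large-deviation bound (a positive fraction of $[0,t]$ is spent above $b$ with overwhelming probability) with the fact that every jump from above $b$ exceeds $b_1$ with probability at least $\int_{b_1}^\infty Q(b,\d y)>0$. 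This yields the exponential tail of $\theta_+$ without constructing any Lyapunov function.
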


Denote by $P^t=e^{tH}$ the transition kernel of $\eta$. For any $x\in\R$,
define the probability measure
\[Q(x,\d y):=\left\{\begin{array}{lcl}\exp(-\int_x^y w(u)\,\d u) w(y)\,\d y
& \mbox{if} & y\ge x,\\ 0 & \mbox{if} & y<x,\end{array}\right.\] which is the
conditional distribution of the endpoint of a jump of $\eta$ provided that
$\eta$ jumps from $x$. We show that the Markov process $\eta$ converges
exponentially fast to its stationary distribution $\rho$ defined by
\eqref{defrho} if the initial distribution is $0$ with probability $1$ or
$Q(0,\cdot)$.

\begin{lemma}\label{lemmaexpconv}
There are $C<\infty$ and $\gamma>0$ such that
\begin{equation}
\left\|P^t(0,\cdot)-\rho\right\|<C\exp(-\gamma t)\label{expconv}
\end{equation}
and
\begin{equation}
\left\|Q(0,\cdot)P^t-\rho\right\|<C\exp(-\gamma t).\label{nuexpconv}
\end{equation}
\end{lemma}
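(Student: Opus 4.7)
The plan is to deduce \eqref{expconv}--\eqref{nuexpconv} from a regenerative structure for $\eta$, using Lemmas \ref{lemma-momgenf} and \ref{lemma+momgenf} to bound cycle lengths. The crucial structural observation is that $\eta$ decreases deterministically at unit rate between upward jumps, so any crossing of a fixed level $b$ from above to below happens by continuous descent and hence hits $b$ exactly. By the strong Markov property, the successive descent times $R_0<R_1<\dots$ through $b$ are true regeneration epochs, and the inter-regeneration cycles $C_n:=R_n-R_{n-1}$ ($n\ge 1$) are i.i.d.\ with the law of the first return to $b$ of the walk started at $b$.

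A single cycle $C_1$ decomposes as $\theta_++\theta_-'$: the time $\theta_+$ for the walk started at $b$ to first exceed $b$ (controlled by Lemma \ref{lemma+momgenf}), plus the subsequent descent time $\theta_-'$ from the overshoot position $\eta(\theta_+)$ back to $b$ (controlled by Lemma \ref{lemma-momgenf} conditionally on $\eta(\theta_+)$). To remove the conditioning I bound the overshoot $\eta(\theta_+)-b$ by the size of the single upward jump that produced it; because $w$ is non-decreasing and $\lim_{v\to\infty}(w(v)-w(-v))>0$, the jump-height density $Q(u,\cdot)$ has exponentially decaying tails uniformly in $u\le b$, so the overshoot itself has exponential tails. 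A H\"older combination with suitably tuned exponents then yields $\expect{e^{\gamma C_1}}<\infty$ for some $\gamma>0$.

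To finish, I choose $b\le 0$. Starting from $\eta(0)=0$, the initial delay $R_0$ has exponential moments by Lemma \ref{lemma-momgenf}; starting from $\eta(0)\sim Q(0,\cdot)$, the density $\exp(-\int_0^y w(s)\,\d s)\,w(y)$ on $[0,\infty)$ itself has exponential tails (using $w\ge w(0)>0$ on $[0,\infty)$), so integrating Lemma \ref{lemma-momgenf} against $Q(0,\cdot)$ again yields exponentially integrable $R_0$. With delay and cycles both exponentially integrable, the exponential-rate renewal theorem (Kendall's theorem, or a direct analysis of the Laplace transform of the renewal measure) gives $\|P^t(b,\cdot)-\rho\|\le Ce^{-\gamma t}$, from which \eqref{expconv} and \eqref{nuexpconv} follow by the standard decomposition of $P^t(\mu,\cdot)-\rho$ at the regeneration time $R_0$. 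The main technical obstacle is the H\"older step in the cycle bound: matching the constants from Lemmas \ref{lemma-momgenf}--\ref{lemma+momgenf} with the exponential decay rate of $Q(u,\cdot)$ so that a positive exponent $\gamma$ survives both the cycle bound and the delay bound simultaneously. If this proves too delicate, a robust fallback is a Harris / Meyn-Tweedie argument using the Lyapunov function $V(u)=e^{c|u|}$ ($c>0$ small) together with a Doeblin minorization on a bounded interval, coming from the smooth density produced by a single upward jump after deterministic descent.
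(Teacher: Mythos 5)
Your regeneration-and-renewal strategy is correct but genuinely different from the paper's argument. The paper proves \eqref{expconv}--\eqref{nuexpconv} by a \emph{Markovian coupling}: two copies $\eta_1,\eta_2$ of the process are run simultaneously, started respectively from $0$ (or $Q(0,\cdot)$) and from $\rho$; as long as both copies are in a fixed bounded interval $I_b=(-b,b)$ they can be merged at a rate bounded below by $\beta(b)>0$, and Lemmas \ref{lemma-momgenf}--\ref{lemma+momgenf} together with a large--deviation estimate show each copy spends a fraction $\ge 3/4$ of $[0,t]$ in $I_b$ with overwhelming probability, hence both spend at least half of $[0,t]$ there simultaneously. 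The coupling time therefore has exponential tails and the standard coupling inequality gives the lemma. Your route instead uses the atom $\{b\}$ (reached exactly at downcrossings because the descent is deterministic at unit speed) to build i.i.d.\ regeneration cycles, bounds the cycle length $C_1=\theta_++\theta_-'$ via the same two lemmas plus an overshoot estimate, and invokes Kendall's theorem / exponential renewal theory. What the paper's route buys is self-containment: everything reduces to the two explicitly proved lemmas and elementary large--deviation bounds, with no reference to the regenerative-process machinery. What your route buys is conceptual cleanliness and slightly less bookkeeping, since the cycle decomposition is essentially the same $X_k,Y_k$ structure the paper already uses inside the proof of Lemma \ref{lemma+momgenf}; the cost is that you must cite Kendall's theorem (or prove a spread-out exponential renewal theorem, which the paper avoids).

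Two small points worth tightening. First, the overshoot distribution given that a jump crosses $b$ is \emph{exactly} $Q(b,\cdot)-b$, independent of the launch point $u\le b$: indeed, $Q(u,(b+x,\infty))/Q(u,(b,\infty))=\exp(-\int_b^{b+x}w(v)\,\d v)\le e^{-w(b)x}$, so the memoryless structure (not merely uniform decay of $Q(u,\cdot)$) gives the exponential-tail bound directly; this is worth saying explicitly since it also makes the overshoot independent of the preceding trajectory, so H\"older is not even strictly necessary. Second, the constant-matching you flag as the main obstacle is manageable: if $(\gamma_1,C_1)$ are the constants from Lemma \ref{lemma-momgenf}, Jensen gives $\condexpect{e^{\gamma\theta_-}}{\eta(0)=y}\le e^{C_1(\gamma/\gamma_1)(y-b)}$ for any $\gamma<\gamma_1$, so the prefactor in the exponent scales linearly down with $\gamma$; choosing $\gamma$ small enough therefore simultaneously satisfies $p\gamma<\gamma_2$ (from Lemma \ref{lemma+momgenf}) and $qC_1\gamma/\gamma_1<w(b)$ (overshoot rate), and the cycle moment generating function is finite. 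With these clarifications the proposal is a complete alternative proof.
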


We give a bound on the decay of the tails of $P^t(0,\cdot)$ and $Q(0,\cdot)P^t$
uniformly in $t$.

\begin{lemma}\label{lemmaunifexpbound}
There are constants $C<\infty$ and $\gamma>0$ such that
\begin{equation}
P^t(0,(x,\infty))\le Ce^{-\gamma x}
\end{equation}
and
\begin{equation}
Q(0,\cdot)P^t(0,(x,\infty))\le Ce^{-\gamma x}
\end{equation}
for all $x\ge0$ and for any $t>0$ uniformly, i.e.\ the value of $C$ and
$\gamma$ does not depend on $x$ and $t$.
\end{lemma}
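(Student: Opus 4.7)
The plan is to combine a Lyapunov-type estimate controlling exponential moments of $\eta(t)$ uniformly on bounded time intervals with the long-time exponential convergence to stationarity supplied by Lemma~\ref{lemmaexpconv} and the exponential decay of the tail of $\rho$ itself. First, from $W(u)=\int_0^u(w(v)-w(-v))\,\d v$ and the hypothesis that $w$ is non-decreasing and non-constant, the positivity of the limit in \eqref{Z<infty} gives $W(u)\ge c_1|u|-C_1$ for some $c_1,C_1>0$, whence $\rho((x,\infty))\le C e^{-c_1 x}$ for all $x\ge 0$.

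The technical heart is the following Lyapunov bound: there exist $\gamma>0$ (small, in particular $\gamma<w(0)$) and $K<\infty$ such that $V(u):=1+e^{\gamma u}$ satisfies $|HV(u)|\le K V(u)$ for every $u\in\R$. Applying the generator on $V$ and performing an integration by parts in the jump term, one finds
\[HV(u)=-\gamma e^{\gamma u}+\gamma w(u)\int_u^\infty e^{\gamma t-\int_u^t w(r)\,\d r}\,\d t.\]
For $u\ge 0$, the bound $w(r)\ge w(0)>\gamma$ on $[u,\infty)$ gives inner integral $\le e^{\gamma u}/(w(u)-\gamma)$, so $HV(u)\le\gamma^2 e^{\gamma u}/(w(0)-\gamma)\le\textrm{const}\cdot V(u)$. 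For $u<0$, I split the integral at $0$: on $[0,\infty)$ the same estimate applies, while on $[u,0]$ the integrand is at most $e^{\gamma t}\le 1$ so the whole integral is uniformly bounded, and the prefactor $w(u)\le w(0)$ keeps $HV(u)$ bounded. Dynkin's formula (with a standard truncation argument) and Gronwall's inequality then yield $\expect[0]{V(\eta(t))}\le 2e^{Kt}$, and Markov's inequality gives
\[P^t(0,(x,\infty))\le e^{-\gamma x}\expect[0]{e^{\gamma\eta(t)}}\le 2e^{Kt-\gamma x}.\]

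To conclude, I would combine this short-time bound with the large-$t$ estimate obtained from Lemma~\ref{lemmaexpconv} and the tail of $\rho$:
\[P^t(0,(x,\infty))\le\rho((x,\infty))+\|P^t(0,\cdot)-\rho\|\le C e^{-c_1 x}+C e^{-\gamma_2 t}.\]
With threshold $t^*(x):=\gamma x/(2K)$, the Gronwall-Markov bound yields $\le 2e^{-\gamma x/2}$ for $t\le t^*(x)$, while the stationary estimate yields $\le C e^{-c_1 x}+C e^{-\gamma_2\gamma x/(2K)}$ for $t>t^*(x)$; both decay exponentially in $x$, which is the required uniform bound. For the analogous statement with initial law $Q(0,\cdot)$, it suffices to check that
\[\expect[Q(0,\cdot)]{e^{\gamma\eta(0)}}=\int_0^\infty e^{\gamma v}w(v)e^{-\int_0^v w(s)\,\d s}\,\d v\]
is finite for $\gamma<w(0)$, which follows immediately by integration by parts; then the same Gronwall-plus-convergence scheme applies with $\delta_0$ replaced by $Q(0,\cdot)$.

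The main obstacle is the uniform generator estimate $|HV|\le KV$, in particular the regime $u\to-\infty$ where $w$ may tend to $0$. The delicate point is that the inner integral $\int_u^\infty e^{\gamma t-\int_u^t w(r)\,\d r}\,\d t$ is only weakly damped on $[u,0]$ when $w$ is small there, but precisely there the prefactor $w(u)$ is also small, while on $[0,\infty)$ the stronger damping $w(r)\ge w(0)>\gamma$ dominates. The careful bookkeeping of this cancellation is the only genuinely nontrivial step; once it is in place, the combination with Lemma~\ref{lemmaexpconv} is routine.
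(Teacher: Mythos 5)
Your Lyapunov/Gronwall approach is genuinely different from the paper's proof, which does not use a drift condition at all. The paper exploits the renewal structure already set up in the proof of Lemma~\ref{lemma+momgenf}: it decomposes the trajectory into excursions above and below $0$, observes that $\lvert\eta(t)\rvert\le Z_{\nu_t+1}$ because $\eta$ decreases at unit speed, and then bounds the tail of the current renewal epoch $Z_{\nu_t+1}$ by writing it as age plus residual life, invoking the renewal equation and the exponential tails of $Z_1$ furnished by Lemmas~\ref{lemma-momgenf} and~\ref{lemma+momgenf}. Your route instead proves a short-time moment bound $\expect[0]{e^{\gamma\eta(t)}}\le 2e^{Kt}$ via a Lyapunov function $V(u)=1+e^{\gamma u}$ and then glues it to the long-time bound from Lemma~\ref{lemmaexpconv} by optimizing over the time threshold $t^*(x)$. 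This is a perfectly viable and somewhat more self-contained scheme; it also buys you an explicit estimate and avoids the renewal-theoretic machinery (the age/residual-life decomposition, the $U([k,k+1])\le U([0,1])$ monotonicity, and the citation of Durrett). It does, however, make essential use of Lemma~\ref{lemmaexpconv}, which the paper's proof does not need.

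There is one concrete flaw in the step you yourself flag as the crux. For $u<0$ you claim that since the integrand on $[u,0]$ is at most $e^{\gamma t}\le 1$, ``the whole integral is uniformly bounded, and the prefactor $w(u)\le w(0)$ keeps $HV(u)$ bounded.'' That is false as stated: the bound you invoke gives $\int_u^0 e^{\gamma t-\int_u^t w}\,\d t\le\lvert u\rvert$, which blows up, and multiplying by $w(u)\le w(0)$ does not rescue it. Your closing paragraph correctly intuits that a cancellation between the smallness of $w(u)$ and the weakness of the damping is needed, but you never carry it out, and the phrasing suggests a delicate balancing argument. In fact the fix is clean and uses monotonicity of $w$ directly:
\begin{equation*}
w(u)\int_u^0 e^{\gamma t-\int_u^t w(r)\,\d r}\,\d t
\le\int_u^0 w(t)\,e^{-\int_u^t w(r)\,\d r}\,\d t
=1-e^{-\int_u^0 w(r)\,\d r}\le1,
\end{equation*}
since $w(u)\le w(t)$ for $t\in[u,0]$ and $e^{\gamma t}\le1$ there. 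With this the drift inequality $HV\le KV$ holds uniformly (one-sided suffices for the supermartingale/Gronwall step, so you do not even need $\lvert HV\rvert\le KV$), and the rest of your argument, including the $Q(0,\cdot)$ variant, goes through. So the route is sound, but the written justification of the key inequality needs to be replaced by the line above.
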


We introduce some notation from \citep{toth_95} and cite a theorem, which will
be the main ingredient of our proof. Let $A>0$ be the scaling parameter, and
let
\[S_A(l)=S_A(0)+\sum_{j=1}^l \xi_A(j)\qquad l\in\N\]
be a discrete time random walk on $\R_+$ with the law
\[\condprob{\xi_A(l)\in\d x}{S_A(l-1)=y}=\pi_A(\d x,y,l)\]
for each $l\in\N$ with
\[\int_{-y}^\infty \pi_A(\d x,y,l)=1.\]
Define the following stopping time of the random walk $S_A(\cdot)$:
\[\omega_{[Ar]}=\inf\{l\ge[Ar]:S_A(l)=0\}.\]

We give the following theorem without proof, because this is the continuous
analog of Theorem 4 in \citep{toth_95} and its proof is essentially identical
to that of the corresponding statement in \citep{toth_95}.
\begin{theorem}\label{thmtoth}
Suppose that the following conditions hold:
\begin{enumerate}
\item The step distributions $\pi_A(\cdot,y,l)$ converge exponentially fast as
$y\to\infty$ to a common asymptotic distribution $\pi$. That is, for each
$l\in\Z$,
\[\int_\R |\pi_A(\d x,y,l)-\pi(\d x)|<Ce^{-\gamma y}.\]

\item The asymptotic distribution is symmetric: $\pi(-\d x)=\pi(\d x)$, and its
moments are finite, in particular, denote
\begin{equation}
\sigma^2:=\int_\R x^2\pi(\d x).\label{defsigma}
\end{equation}

\item Uniform decay of the step distributions: for each $l\in\Z$,
\[\pi_A((x,\infty),y,l)\le Ce^{-\gamma x}.\]

\item Uniform non-trapping condition: The random walk is not trapped in a
bounded domain or in a domain away from the origin. That is, there is
$\delta>0$ such that
\begin{equation}
\int_\delta^\infty \pi_A(\d x,y,l)>\delta\quad\mbox{or}\quad
\int_{x=\delta}^\infty \int_{z=-\infty}^\infty \pi_A(\d x-z,y+z,l+1)\pi_A(\d
z,y,l)>\delta \label{nontrapping}
\end{equation}
and
\[\int_{-\infty}^{-(\delta\wedge y)} \pi_A(\d x,y,l)>\delta.\]
\end{enumerate}
Under these conditions, if
\[\frac{S_A(0)}{\sigma\sqrt A}\to h,\]
then
\begin{equation}
\left(\frac{\omega_{[Ar]}}A,\frac{S_A([Ay])}{\sigma\sqrt A}:0\le y\le
\frac{\omega_{[Ar]}}A\right)\Longrightarrow\left(\omega_r^W,|W_y|:0\le
y\le\omega_r^W\bigm||W_0|=h\right)
\end{equation}
in $\R_+\times D[0,\infty)$ as $A\to\infty$ where
\[\omega_r^W=\inf\{s>0:W_s=0\}\]
with a standard Brownian motion $W$ and $\sigma$ is given by \eqref{defsigma}.
\end{theorem}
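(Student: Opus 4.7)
The plan is to establish this invariance principle by combining a Donsker-type argument for the ``bulk'' behaviour of $S_A$ (when the walker is far from the origin) with an excursion/reflection argument that handles the boundary at $0$. First, by condition (1), when $y$ is large the step distribution $\pi_A(\cdot,y,l)$ is exponentially close in total variation to the fixed symmetric distribution $\pi$ with finite variance $\sigma^2$. Combined with the uniform exponential tail bound in condition (3), this allows one to couple $S_A$ with a reference random walk $\tilde S_A$ whose increments are i.i.d.\ with law $\pi$, so that the coupling is faithful with overwhelming probability as long as $S_A$ stays above some threshold $M$. Donsker's invariance principle then gives $\tilde S_A([Ay])/(\sigma\sqrt A) \Longrightarrow W_y$ for a standard Brownian motion $W$.

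Next I would carry out an excursion analysis. The walk $S_A$ decomposes into excursions above a threshold $M$, separated by short sojourns in $[0,M]$. Inside each excursion the coupling with $\tilde S_A$ is good, so after rescaling the excursion shape converges to that of a Brownian excursion. The non-trapping condition (4) delivers two things at once: the walker cannot spend macroscopic time in any bounded subdomain (so sojourns near $0$ contribute a negligible share of time on the $A$-scale), and from every location it has a uniformly positive chance both of ascending by a macroscopic amount and of returning to $0$. Together with the uniform tail bound from condition (3) (of the Lemma~\ref{lemmaunifexpbound} type) this yields tightness of $S_A([A\cdot])/(\sigma\sqrt A)$ in $D[0,\infty)$.

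To produce reflection, note that each hit of $0$ acts exactly like a reflection: the walk is constrained to $\R_+$, and by the symmetry of $\pi$ an unconstrained Brownian scaling limit of the concatenated excursions has the law of $|W|$. Formally I would argue via Skorokhod embedding on the excursions, showing that the joint law of successive excursion lengths and heights of $S_A$, suitably rescaled, converges to the image of the It\^o excursion measure of reflected Brownian motion. Since $\omega_{[Ar]}$ is by definition the end of the first excursion beginning at or after step $[Ar]$, its rescaled value $\omega_{[Ar]}/A$ converges jointly with the process to $\omega_r^W$, the first zero of $|W|$ after time $r$. The assumed convergence $S_A(0)/(\sigma\sqrt A)\to h$ fixes the starting point of the limiting $|W|$.

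The main obstacle, as already in \citep{toth_95}, is controlling the walk in the boundary layer near the origin, where the step laws are genuinely $(y,l)$-dependent and the convergence of $\pi_A(\cdot,y,l)$ to $\pi$ fails uniformly at small $y$. I would overcome this by choosing an intermediate threshold $M = M_A \to \infty$ that grows slowly (for instance $M_A = \log A$), and using the exponential convergence in condition (1) together with the uniform tail bound (3) and the non-trapping condition (4) to show that the total elapsed time and the total displacement accumulated while $S_A < M_A$ are of order $o(A)$ and $o(\sqrt A)$ respectively. Hence the state-dependent dynamics near the boundary are macroscopically invisible after scaling, leaving only the reflection effect. All remaining steps, namely tightness, identification of the limit through the generator or via Skorokhod representation, and the joint convergence of the stopping time together with the process, are then standard.
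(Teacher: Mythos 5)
The paper itself does \emph{not} prove Theorem~\ref{thmtoth}: the sentence preceding the statement explicitly says it is ``the continuous analog of Theorem 4 in \citep{toth_95}'' and that ``its proof is essentially identical to that of the corresponding statement in \citep{toth_95}.'' So there is no in-paper argument to compare against; your sketch has to be judged on its own and against the strategy of \citep{toth_95}.

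Your high-level plan (couple $S_A$ with an i.i.d.\ $\pi$-walk above a slowly growing threshold $M_A$, use Donsker in the bulk, control the sub-$M_A$ layer via non-trapping and excursion theory, and identify the limit via the It\^o measure of reflected Brownian motion) is a sensible route. Nevertheless, as written it has both a quantitative slip and an unaddressed technical core. (i) The choice $M_A=\log A$ is not justified: the per-step total-variation coupling bound at height $y$ is only $Ce^{-\gamma y}$, so over the $O(A)$ relevant steps the union bound is $O(A e^{-\gamma M_A})=O(A^{1-\gamma})$, which does not go to $0$ unless $\gamma>1$; you need $M_A = K\log A$ with $K>1/\gamma$, or in general $M_A$ with $Ae^{-\gamma M_A}\to 0$. (ii) Condition~(4) grants only a one- or two-step move of size at least the fixed microscopic $\delta$ with probability at least $\delta$; it does \emph{not} directly give ``a uniformly positive chance of ascending by a macroscopic amount.'' To conclude that the time spent in $[0,M_A]$ is $o(A)$ and the displacement accrued there is $o(\sqrt A)$, one must upgrade this to exponential moment bounds on the return time to the upper threshold (a renewal argument of the kind carried out for the model in Lemmas~\ref{lemma-momgenf}--\ref{lemma+momgenf}), and your sketch omits that step entirely. (iii) Most importantly, the identification of the limit as $|W|$ with an instantaneously reflecting boundary, and the joint convergence of $\omega_{[Ar]}/A$, is the actual substance of Theorem~4 of \citep{toth_95}; your outline names the tools (Skorokhod representation, It\^o excursion measure) but does not say what must be verified: tightness of the stopping time, the exponential return-time estimates that prevent a sticky or absorbing boundary in the limit, and the matching of excursion height/length scaling. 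Until these are supplied, the proposal remains a plausible plan rather than a proof, and since the paper delegates the whole argument to \citep{toth_95} it cannot be determined from the present source whether your route coincides with theirs.
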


\subsection{Proof of Theorem \ref{thmlimLambda}}

Using the auxiliary Markov processes introduced in Subsection \ref{basic}, we
can build up the local time sequence as a random walk. This
Ray\,--\,Knight-type construction is the main idea of the following proof.

\begin{proof}[Proof of Theorem \ref{thmlimLambda}]
Fix $j\in\Z$ and $r\in\R_+$. Using the definition \eqref{Lambdadef} and the
construction of $\eta_{k,\pm}$ \eqref{deftau}--\eqref{defetak}, we can
formulate the following recursion for $\Lambda_{j,r}$:
\begin{equation}\begin{aligned}
&\Lambda_{j,r}(j)=r,\\
&\Lambda_{j,r}(k+1)=\Lambda_{j,r}(k)+\eta_{k,-}(\Lambda_{j,r}(k)) \qquad &
\mbox{if}\quad k\ge j,\\
&\Lambda_{j,r}(k-1)=\Lambda_{j,r}(k)+\eta_{k-1,+}(\Lambda_{j,r}(k)) \qquad &
\mbox{if}\quad k\le j.
\end{aligned}\label{Lambdawalk}\end{equation}

It means that the processes $(\Lambda_{j,r}(j-k))_{k=0}^\infty$ and
$(\Lambda_{j,r}(j+k))_{k=0}^\infty$ are random walks on $\R_+$, they start from
$\Lambda_{j,r}(j)=r$, and the distribution of the following step always depends
on the actual position of the walker. In order to apply Theorem \ref{thmtoth},
we rewrite \eqref{Lambdawalk}:
\begin{align*}
\Lambda_{j,r}(j+k)&=h+\sum_{i=0}^{k-1} \eta_{j+i,-}(\Lambda_{j,r}(j+i)) &
k&=0,1,2,\dots,\\
\Lambda_{j,r}(j-k)&=h+\sum_{i=0}^{k-1} \eta_{j-i-1,+}(\Lambda_{j,r}(j-i)) &
k&=0,1,2,\dots.
\end{align*}
The step distributions of this random walks are
\[\pi_A(\d x,y,l)=\left\{\begin{array}{l} P^y(0,\d x)\\[1ex]
Q(0,\cdot)P^y(\d x)\end{array}\right.\]
according to \eqref{initial+}--\eqref{initial-}.

The exponential closeness of the step distribution to the stationary
distribution is shown by Lemma \ref{lemmaexpconv}. One can see from
\eqref{defrho} and \eqref{defW} that the distribution $\rho$ is symmetric and
it has a non-zero finite variance. Lemma \ref{lemmaunifexpbound} gives a
uniform exponential bound on the tail of the distributions $P^t(0,\cdot)$ and
$Q(0,\cdot)P^t$.

Since we only consider $[\lambda_{j,r},\rho_{j,r}]$, that is, the time
interval until $\Lambda_{j,r}$ hits $0$, we can force the walk to jump to
$1$ in the next step after hitting $0$, which does not influence our
investigations. It means that $\pi_A(\{1\},0,l)=1$ for $l\in\Z$, and with
this, the non-trapping condition \eqref{nontrapping} fulfils. Therefore,
Theorem \ref{thmtoth} is applicable for the forward and the backward walks,
and Theorem \ref{thmlimLambda} is proved.
\end{proof}

\section{The position of the random walker}\label{Xconv}

We turn to the proof of Theorem \ref{thmXconv}. First, we introduce the
rescaled distribution
\[\varphi_A(t,x):=A^{2/3}\prob{X(t)=\lfloor A^{2/3}x\rfloor}\]
where $t,x\in\R_+$. We define the Laplace transform of $\varphi_A$ with
\begin{equation}
\hat\varphi_A(s,x)=s\int_0^\infty e^{-st}\varphi_A(t,x)\,\d t,\label{deffiAhat}
\end{equation}
which is the position of the random walker at an independent random time of
exponential distribution with mean $A/s$.

We denote by $\hat\omega$ the Laplace transforms of $\omega$ defined in
\eqref{defomega} and rewrite \eqref{deffihat}:
\begin{gather*}
\hat\omega(s,x,h):=s\int_0^\infty e^{-st}\omega(t,x,h)\,\d t
=s\,\expect{e^{-s\cT_{x,h}}},\\
\hat\varphi(s,x)=s\int_0^\infty e^{-st}\varphi(t,x)\,\d t
=\int_0^\infty\hat\omega(s,x,h)\,\d h.
\end{gather*}
Note that the scaling relations
\begin{align}
\alpha\omega(\alpha t,\alpha^{2/3}x,\alpha^{1/3}h)&=\omega(t,x,h),\notag\\
\alpha^{2/3}\hat\varphi(\alpha^{-1}s,\alpha^{2/3}x)&=\hat\varphi(s,x)\label{fiscaling}
\end{align}
hold because of the scaling property of the Brownian motion.

\begin{proof}[Proof of Theorem \ref{thmXconv}]
The first observation for the proof is the identity
\begin{equation}
\prob{X(t)=k}=\int_{h=0}^\infty \prob{T_{k,h}\in(t,t+\d h)},\label{Ptransf}
\end{equation}
which follows from \eqref{defT}. If we insert it to the definition of
$\hat\varphi_A$ \eqref{deffiAhat}, then we get
\begin{equation}\label{fihatcomp}\begin{split}
\hat\varphi_A(s,x)&=sA^{-1/3}\int_0^\infty e^{-st/A}\prob{X(t)=\lfloor
A^{2/3}x\rfloor}\d t\\
&=sA^{-1/3}\int_0^\infty e^{-st/A}\int_{h=0}^\infty\prob{T_{\lfloor
A^{2/3}x\rfloor,h}\in(t,t+\d h)}\d t\\
&=sA^{-1/3}\int_0^\infty \expect{e^{-sT_{\lfloor A^{2/3}x\rfloor,h}/A}}\d h
\end{split}\end{equation}
using \eqref{Ptransf}. Defining
\[\hat\omega_A(s,x,h)=s\expect{\exp(-sT_{\lfloor A^{2/3}x\rfloor,\lfloor
A^{1/3}\sigma h\rfloor}/(\sigma A))}\]
gives us
\begin{equation}\label{fihatfinal}
\hat\varphi_A(s,x)=\int_0^\infty \hat\omega_A(\sigma s,x,h)\,\d h
\end{equation}
from \eqref{fihatcomp}. From Corollary \ref{corTlim}, it follows that, for any
$s>0$, $x\ge0$ and $h>0$,
\[\hat\omega_A(s,x,h)\to\hat\omega(s,x,h).\]

Applying Fatou's lemma in \eqref{fihatfinal}, one gets
\begin{equation}
\liminf_{A\to\infty}\hat\varphi_A(s,x) \ge\int_0^\infty \hat\omega(\sigma
s,x,h)\,\d h =\sigma^{2/3}\hat\varphi(s,\sigma^{2/3}x),\label{liminffihat}
\end{equation}
where we used \eqref{fiscaling} in the last equation. A consequence of
\eqref{intfi}, \eqref{liminffihat} integrated and a second application of
Fatou's lemma yield
\[1=\int_{-\infty}^\infty \hat\varphi(s,x)\,\d x \le \int_{-\infty}^\infty
\liminf_{A\to\infty}\hat\varphi_A(s,x)\,\d x \le \liminf_{A\to\infty}
\int_{-\infty}^\infty \hat\varphi_A(s,x)\,\d x=1,\] which gives that, for fixed
$s\in\R_+$, $\hat\varphi_A(s,x)\to\hat\varphi(s,x)$ holds for almost all
$x\in\R$, indeed.
\end{proof}

\section{Proof of lemmas}\label{proofs}

\subsection{Exponential moments of the return times}

\begin{proof}[Proof of Lemma \ref{lemma-momgenf}]
Consider the Markov process $\zeta(t)$ which decreases with constant speed $1$,
it has upwards jumps with homogeneous rate $w(-b)$, and the distribution of
the size of a jump is the same as that of $\eta$, provided that the jump
starts from $b$. In other words, the infinitesimal generator of $\zeta$ is
\[(Zf)(u)=-f'(u)+w(-b)\int_0^\infty e^{-\int_0^v w(b+s)\,\d s}w(b+v)
(f(u+v)-f(u))\,\d v.\]

Note that, by the monotonicity of $w$, $\eta$ and $\zeta$ can be coupled in such
a way that they start from the same position and, as long as $\eta\ge b$ holds,
$\zeta\ge\eta$ is true almost surely. It means that it suffices to prove
\eqref{esttheta-} with
\begin{equation}
\theta'_-:=\inf\{t>0:\zeta(t)\le b\}\label{theta'-def}
\end{equation}
instead of $\theta_-$. But the transitions of $\zeta$ are homogeneous in space,
which yields that \eqref{esttheta-} follows from the finiteness of
\begin{equation}
\condexpect{\exp(\gamma\theta'_-)}{\zeta(0)=b+1}.\label{theta'-}
\end{equation}

In addition to this, $\zeta$ is a supermartingale with stationary increments,
which gives us
\[\expect{\zeta(t)}=b+1-ct\]
with some $c>0$, if the initial condition is $\zeta(0)=b+1$. For
$\alpha\in\left(-\infty,\lim_{u\to\infty}\frac{W(u)}u\right)$
(c.f.\ \eqref{Z<infty}), the expectation
\[\log\expect{e^{\alpha(\zeta(t)-\zeta(0))}}\]
is finite, and negative for some $\alpha>0$. Hence, the martingale
\begin{equation}
M(t)=\exp\left(\alpha(\zeta(t)-\zeta(0))-t\log\expect{e^{\alpha(\zeta(1)-\zeta(0))}}\right)
\end{equation}
stopped at $\theta'_-$ gives that the expectation in \eqref{theta'-} is finite
with $\gamma=-\log\expect{e^{\alpha(\zeta(1)-\zeta(0))}}$.
\end{proof}

\begin{proof}[Proof of Lemma \ref{lemma+momgenf}]
First, we prove for negative $b$, more precisely, for which $w(-b)>w(b)$. In
this case, define the homogeneous process $\kappa$ with $\kappa(0)=b$ and generator
\[Kf(u)=-f'(u)+w(-b)\int_0^\infty e^{-w(b)s}w(b)(f(u+s)-f(u))\,\d s.\]
It is easy to see that there is a coupling of $\eta$ and $\kappa$, for which
$\eta\ge\kappa$ as long as $\eta\le b$. Therefore, it is enough to show
\eqref{esttheta+} with
\[\theta'_+:=\inf\{t>0:\kappa(t)\ge b\}\]
instead of $\theta_+$.

But $\kappa$ is a submartingale with stationary increments, for which
\[\log\expect{e^{\alpha(\kappa(t)-\kappa(0))}}\]
is finite if $\alpha\in(-\infty,w(b))$, and negative for some $\alpha<0$.
The statement follows from the same idea as in the proof of Lemma
\ref{lemma-momgenf}.

Now, we prove the lemma for the remaining case. Fix $b$, for which we already
know \eqref{esttheta+}, and chose $b_1>b$ arbitrarily. We start $\eta$ from
$\eta(0)=b_1$, and we decompose its trajectory into independent excursions
above and below $b$, alternatingly. Let
\begin{equation}
Y_0:=\inf\{t\ge0:\eta(t)\le b\},
\end{equation}
and by induction, define
\begin{align}
X_k&:=\inf\left\{t>0:\eta\left(\sum_{j=1}^{k-1} X_j+\sum_{j=0}^{k-1}
Y_j+t\right)\ge b\right\},
\label{defX}\\
Y_k&:=\inf\left\{t\ge0:\eta\left(\sum_{j=1}^k X_j+\sum_{j=0}^{k-1} Y_j+t
\right)\le b\right\} \label{defY}
\end{align}
if $k=1,2,\dots$. Note that $(X_k,Y_k)_{k=1,2,\dots}$ is an i.i.d.\ sequence of
pairs of random variables. Finally, let
\begin{equation}
Z_k:=X_k+Y_k\qquad k=1,2,\dots.\label{defZ}
\end{equation}

With this definition, the $Z_k$'s are the lengths of the epochs in a renewal
process. Lemma \ref{lemma-momgenf} tells us that $Y_0$ has finite exponential
moment. The same holds for $X_1,X_2,\dots$ because of the first part of this
proof for the case of small $b$. Note that the distribution of the upper
endpoint of a jump of $\eta$ conditionally given that $\eta$ jumps above $b$
is exactly $Q(b,\cdot)$. Since $Q(b,\cdot)$ decays exponentially fast, we
can use Lemma \ref{lemma-momgenf} again to conclude that
$\expect{\exp(\gamma Y_k)}<\infty$ for $\gamma>0$ small enough. Define
\begin{equation}
\nu_t:=\max\left\{n\ge0:\sum_{k=1}^n Z_k\le t\right\}\label{defnut}
\end{equation}
in the usual way. The following decomposition is true:
\begin{equation}\begin{split}
&\prob{\frac{\sum_{k=1}^{\nu_t+1} Y_k}t<\varepsilon}\\
&\qquad\qquad\le\prob{\frac{\nu_t+1}t<\frac12\frac1{\expect{Z_1}}}
+\prob{\frac{\sum_{k=1}^{\nu_t+1} Y_k}t<\varepsilon,
\frac{\nu_t+1}t\ge\frac12\frac1{\expect{Z_1}}}.
\end{split}\label{X/tdecomp}\end{equation}

Lemma 4.1 of \citep{vandenberg_toth_91} gives a large deviation principle for
the renewal process $\nu_t$, hence
\begin{equation}
\prob{\frac{\nu_t+1}t<\frac12\frac1{\expect{Z_1}}}
\le\prob{\frac{\nu_t}t<\frac12\frac1{\expect{Z_1}}}<e^{-\gamma t}
\end{equation}
with some $\gamma>0$. For the second term on the right-hand side in
\eqref{X/tdecomp},
\begin{equation}\begin{split}
&\prob{\frac{\sum_{k=1}^{\nu_t+1} Y_k}t<\varepsilon,
\frac{\nu_t+1}t\ge\frac12\frac1{\expect{Z_1}}}\\
&\hspace*{7em} =\prob{\frac{\sum_{k=1}^{\nu_t+1} Y_k}{\nu_t+1}
<\varepsilon\frac t{\nu_t+1},
\frac{\nu_t+1}t\ge\frac12\frac1{\expect{Z_1}}}\\
&\hspace*{7em}\le\prob{\frac{\sum_{k=1}^{\nu_t+1}Y_k}{\nu_t+1}
<2\varepsilon\expect{Z_1},
\frac{\nu_t+1}t\ge\frac12\frac1{\expect{Z_1}}}\\
&\hspace*{7em}\le\max_{n\ge\frac12\frac1{\expect{Z_1}}t}
\prob{\frac{\sum_{k=1}^n Y_k}n<2\varepsilon\expect{Z_1}},
\end{split}\label{X/test}\end{equation}
which is exponentially small for some $\varepsilon>0$ by standard large
deviation theory, and the same holds for the probability estimated is
\eqref{X/tdecomp}, which means that $\eta$ spends at least $\varepsilon t$
time above $b$ with overwhelming probability.

The inequality
\[\condprob{\theta_+>t}{\eta(0)=b_1}
\le\prob{\sum_{k=1}^{\nu_t+1}Y_k<\varepsilon t}
+\Condprob{\theta_+>t}{\eta(0)=b_1,\sum_{k=1}^{\nu_t+1}Y_k>\varepsilon t}\]
is obvious. The first term on the right-hand side is exponentially small by
\eqref{X/tdecomp}--\eqref{X/test}. In order to bound the second term, denote
by $J(t)$ the number of jumps when $\eta(s)\ge b$. The condition
$\sum_{k=1}^{\nu_t+1}Y_k>\varepsilon t$ means that this is the case in an at
least $\varepsilon$ portion of $[0,t]$. The rate of these jumps are at least
$w(-b)$ by the monotonicity of $w$. Note that $J(t)$ dominates stochastically
a Poisson random variable $L(t)$ with mean $w(-b)t$. Hence,
\begin{equation}
\prob{J(t)<\frac12w(-b)t}\le\prob{L(t)<\frac12w(-b)t}<e^{-\gamma t}\label{theta+tail}
\end{equation}
for $t$ large enough with some $\gamma>0$ by a standard large deviation
estimate.

Note that $Q$ is also monotone in the sense that
\[\int_{b_1}^\infty Q(x_1,\d y)<\int_{b_1}^\infty Q(x_2,\d y)\]
if $x_1<x_2$. Therefore, a jump of $\eta$, which starts above $b$, exits
$(-\infty,b_1]$ with probability at least
\[r=\int_{b_1}^\infty Q(b,\d y)>0.\]
Finally,
\[\begin{split}
&\Condprob{\theta_+>t}{\eta(0)=b_1,\sum_{k=1}^{\nu_t+1} Y_k>\varepsilon t}\\
&\quad\le\prob{J(t)<\frac12w(-b)\varepsilon t}
+\Condprob{\theta_+>t}
{J(t)\ge\frac12w(-b)\varepsilon t,\eta(0)=b_1,\sum_{k=1}^{\nu_t+1} Y_k>\varepsilon t}\\
&\quad\le e^{-\gamma t}+(1-r)^{\frac12w(-b)\varepsilon t}
\end{split}\]
by \eqref{theta+tail}, which is an exponential decay, as required.
\end{proof}

\subsection{Exponential convergence to the stationarity}

\begin{proof}[Proof of Lemma \ref{lemmaexpconv}]
First, we prove \eqref{expconv}. We couple two copies of $\eta$, say $\eta_1$
and $\eta_2$. Suppose that
\[\eta_1(0)=0\qquad\mbox{and}\qquad\prob{\eta_2(0)\in A}=\rho(A).\]
Their distribution after time $t$ are obviously $P^t(0,\cdot)$ and $\rho$,
respectively. We use the standard coupling lemma to estimate their variation
distance:
\[\left\|P^t(0,\cdot)-\rho\right\|\le\prob{T>t}\]
where $T$ is the random time when the two processes merge.

Assume that $\eta_1=x_1$ and $\eta_2=x_2$ with fixed numbers $x_1,x_2\in\R$.
Then there is a coupling where the rate of merge is
\[c(x_1,x_2):=w(-x_1\vee x_2)\exp\left(-\int_{x_1\wedge x_2}^{x_1\vee x_2}
w(z)\,\d z\right).\]
Consider the interval $I_b=(-b,b)$ where $b$ will be chosen later
appropriately. If $\eta_1=x_1$ and $\eta_2=x_2$ where $x_1,x_2\in I_b$, then
for the rate of merge
\begin{equation}
c(x_1,x_2)\ge w(-b)\exp\left(-\int_{-b}^b w(z)\,\d z\right)=:\beta(b)>0
\label{rateofmerge}
\end{equation}
holds if $w(x)>0$ for all $x\in\R$.

Let $\vartheta$ be the time spent in $I_b$, more precisely,
\begin{align*}
\vartheta_i(t)&:=|\{0\le s\le t:\eta_i(s)\in I_b\}|\qquad i=1,2,\\
\vartheta_{12}(t)&:=|\{0\le s\le t:\eta_1(s)\in I_b,\eta_2(s)\in I_b\}|.
\end{align*}
The estimate
\[\prob{T>t}\le\prob{\vartheta_{12}(t)<\frac t2}
+\condprob{T>t}{\vartheta_{12}(t)\ge\frac t2}\]
is clearly true. Note that
\[\Condprob{T>t}{\vartheta_{12}(t)\ge\frac
t2}\le\exp\left(-\frac12\beta(b)t\right)\]
follows from \eqref{rateofmerge}.

By the inclusion relation
\begin{equation}
\left\{\vartheta_{12}(t)<\frac t2\right\}\subset \left\{\vartheta_1(t)<\frac34
t\right\}\cup\left\{\vartheta_2<\frac34 t\right\},\label{inclusion}
\end{equation}
it suffices to prove that the tails of $\prob{\vartheta_i(t)<\frac34 t}$
decay exponentially $i=1,2$, if $b$ is large enough.

We will show that
\begin{equation}
\prob{\frac{|\{0\le s\le t:\eta(s)<b\}|}t<\frac78}\le e^{-\gamma t}.\label{7/8}
\end{equation}
A similar statement can be proved for the time spent above $-b$, therefore
another inclusion relation like \eqref{inclusion} gives the lemma.

First, we verify that the first hitting of level $b$
\[\inf\{s>0:\eta_i(s)=b\}\]
has finite exponential moment, hence, it is negligible with overwhelming
probability and we can suppose that $\eta_i(0)=b$. Indeed, for any fixed
$\varepsilon>0$, the measures $\rho$ and $Q(b,\cdot)$ assign exponentially
small weight to the complement of the interval $[-\varepsilon t,\varepsilon t]$
as $t\to\infty$. From now on, we suppress the subscript of $\eta_i$, we forget
about the initial values, and assume only that
$\eta(0)\in[-\varepsilon t,\varepsilon t]$.

If $\eta(0)\in[b,\varepsilon t]$, then recall the proof Lemma
\ref{lemma-momgenf}. There, we could majorate $\eta$ with a homogeneous
process $\zeta$. If we define
\[a:=\condexpect{\theta'_-}{\zeta(0)=b+1}\]
with the notation \eqref{theta'-def}, which is finite by Lemma
\ref{lemma-momgenf}, then from a large deviation principle,
\begin{equation}
\condprob{\theta_-(t)>2a\varepsilon t}{\eta(0)\in[b,\varepsilon t]}
\le\condprob{\theta'_-(t)>2a\varepsilon t}{\eta(0)\in[b,\varepsilon t]}
<e^{-\gamma t}\label{theta-largedev}
\end{equation}
with some $\gamma>0$.

If $\eta(0)\in[-\varepsilon t,b]$, then we can neglect that piece of the
trajectory of $\eta$ which falls into the interval $[0,\theta_+]$, because
without this, $\vartheta(t)$ decreases and the bound on \eqref{7/8}
becomes stronger. Since $\eta$ jumps at $\theta_+$ a.s.\ and the distribution
of $\eta(\theta_+)$ is $Q(b,\cdot)$, we can use the previous observations
concerning the case $\eta(0)\in[b,\varepsilon t]$.

Using \eqref{theta-largedev}, it is enough to prove that
\[\prob{\frac{|\{0\le s\le t:\eta(s)<b\}|}t<\frac78+2a\varepsilon}\le e^{-\gamma t}\]
with the initial condition $\eta(0)=b$ where the value of $b$ is not specified
yet. We introduce $X_k,Y_k,Z_k$ and $\nu_t$ as in \eqref{defX}--\eqref{defnut}
with $Y_0\equiv0$. The only difference is that here we want to ensure a given
portion of time spent below $b$ with high probability with the appropriate
choice of $b$. With the same idea as in the proof of Lemma \ref{lemma+momgenf}
in \eqref{X/tdecomp}--\eqref{X/test}, we can show that
\[\prob{\frac{\sum_{k=1}^{\nu_t+1} X_k}t\le\frac78+2a\varepsilon}\]
is exponentially small by large deviation theory if we choose $b$ large enough
to set $\expect{X_1}/\expect{Z_1}$ (the expected portion of time spent below
$b$) sufficiently close to $1$. With this, the proof of \eqref{expconv} is
complete, that of \eqref{nuexpconv} is similar.
\end{proof}

\subsection{Decay of the transition kernel}

\begin{proof}[Proof of Lemma \ref{lemmaunifexpbound}]
We return to the idea that the partial sums of $Z_k$'s form a renewal process.
Remember the definitions \eqref{defX}--\eqref{defnut}. This proof relies on the
estimate
\[|\eta(t)|\le Z_{\nu_t+1},\]
which is true, because the process $\eta$ can decrease with speed at most $1$.
Therefore, it suffices to prove the exponential decay of the tail of
$Z_{\nu_t+1}$.

Define the \emph{renewal measure} with
\[U(A):=\sum_{n=0}^\infty \prob{\sum_{k=1}^n Z_k\in A}\]
for any $A\subset\R$. We consider the \emph{age} and the \emph{residual waiting
time}
\begin{align*}
A_t&:=t-\sum_{k=1}^{\nu_t} Z_k,\\
R_t&:=\sum_{k=1}^{\nu_t+1} Z_k-t
\end{align*}
separately. For the distribution of the former $H(t,x):=\prob{A_t>x}$, the
renewal equation
\begin{equation}
H(t,x)=(1-F(t))\ind{t>x}+\int_0^t H(t-s,x)\,\d F(s)\label{reneq}
\end{equation}
holds where $F(x)=\prob{Z_1<x}$. \eqref{reneq} can be deduced by conditioning
on the time of the first renewal, $Z_1$. From Theorem (4.8) in
\citep{durrett_95}, it follows that
\begin{equation}
H(t,x)=\int_0^t(1-F(t-s))\ind{t-s>x}U(\d s).\label{formH}
\end{equation}

As explained after \eqref{defZ}, Lemma \ref{lemma-momgenf} and Lemma
\ref{lemma+momgenf} with $b=0$ together imply that $1-F(x)\le C e^{-\gamma x}$
with some $C<\infty$ and $\gamma>0$. On the other hand,
\[U([k,k+1])\le U([0,1])\]
is true, because, in the worst case, there is a renewal at time $k$. Otherwise,
the distribution of renewals in $[k,k+1]$ can be obtained by shifting the
renewals in $[0,1]$ with $R_k$. We can see from \eqref{formH} by splitting the
integral into segments with unit length that
\[H(t,x)\le U([0,1])\sum_{k=\lfloor x\rfloor}^\infty C e^{-\gamma k},\]
which is uniform in $t>0$.

With the equation
\[\{R_t>x\}=\{A_{t+x}\ge x\}=\{\mbox{no renewal in } (t,t+x]\},\]
a similar uniform exponential bound can be deduced for the tail $\prob{R_t>x}$.
Since $Z_{\nu_t+1}=A_t+R_t$, the proof is complete.
\end{proof}

\subsection*{Acknowledgements}
This research was partially supported by the OTKA (Hungarian National Research
Fund) grants K 60708 (for B.\ T.\ and B.\ V.) and TS 49835 (for B.\ V.).
B.\ V.\ thanks the kind hospitality of the Erwin Schr\"odinger Institute
(Vienna) where part of this work was done.

\vfill

\hbox{ \phantom{M} \hskip7cm
\vbox{\hsize8cm {\noindent Address of authors:\\
{\sc
Institute of Mathematics\\
Budapest University of Technology \\
Egry J\'ozsef u.\ 1\\
H-1111 Budapest, Hungary}\\[10pt]
e-mail:\\
{\tt balint{@}math.bme.hu}\\
{\tt vetob{@}math.bme.hu}
}}}
\end{document}